\documentclass[11pt,twoside,reqno]{amsart}
\usepackage[all]{xy}
        \usepackage {amssymb,latexsym,amsthm,amsmath,mathtools,dsfont,mathrsfs}
        \usepackage{enumitem,color}

        \textheight = 8.3in
        \textwidth = 5.8in
        \setlength{\oddsidemargin}{.8cm}
        \setlength{\evensidemargin}{.8cm}

\usepackage{mathtools}

\usepackage{babel} 

\usepackage{hyperref}
\long\def\symbolfootnote[#1]#2{\begingroup%
\def\thefootnote{\fnsymbol{footnote}}\footnote[#1]{#2}\endgroup}
\newcommand{\Z}{\ensuremath{\mathbb{Z}}}
\newcommand{\ce}{\ensuremath{\mathscr{C}}}

\newcommand{\tr}{\ensuremath{{}^t\!}}

\newcommand{\fq}{\ensuremath{\mathbb{F}_q}}

\newcommand{\ssp}{\mathcal{D}}
\newcommand{\pa}{\mathcal{P}}

\newcommand{\GL}{\textup{GL}}
\newcommand{\SL}{\textup{SL}}
\newcommand{\SP}{\textup{Sp}}
\newcommand{\Or}{\textup{O}}

\newcommand{\F}{\overline{\mathbb{F}_q}}
\newcommand{\U}{\textup{U}}

\makeatletter
\def\imod#1{\allowbreak\mkern10mu({\operator@font mod}\,\,#1)}
\makeatother
\makeatletter
\renewcommand*\env@matrix[1][*\c@MaxMatrixCols c]{%
  \hskip -\arraycolsep
  \let\@ifnextchar\new@ifnextchar
  \array{#1}}
\makeatother
\usepackage[capitalize,nameinlink]{cleveref} 
\usepackage{comment} 
\usepackage{xcolor} 
\hypersetup{
    colorlinks,
    linkcolor={red!80!black},
    citecolor={green!80!black},
    urlcolor={blue!80!black}
}
\newtheorem{theorem}{Theorem}[section]
\newtheorem{lemma}[theorem]{Lemma}
\newtheorem{corollary}[theorem]{Corollary}
\newtheorem{proposition}[theorem]{Proposition}
\newtheorem*{theorem*}{Theorem}
\theoremstyle{definition}
\newtheorem{definition}[theorem]{Definition}

\numberwithin{equation}{section}
\newcommand{\ignore}[1]{}

\newcommand{\mynote}[1]{}
\setcounter{tocdepth}{1}
\begin{document}
\setcounter{section}{0}
\title[Accepatbility of classical groups]{Acceptability of classical groups\\ in non-zero characteristic}
\author{Saikat Panja}
\email{panjasaikat300@gmail.com}
\address{Harish-Chandra Research Institute- Main Building, Chhatnag Rd, Jhusi, Uttar Pradesh 211019, India}
\thanks{The author (Panja) is supported by PDF-M fellowship from HRI.}
\date{\today}
\subjclass[2020]{20C33, 20C99}
\keywords{acceptable groups, classical groups, element-conjugacy, global conjugacy}


\begin{abstract}
A group $G$ is called to be \emph{acceptable} (due to M. Larsen) if for any finite group $H$, two element-conjugate homomorphisms are globally conjugate. We answer the acceptability question for general linear, special linear, unitary, symplectic and all orthogonal groups over an algebraically closed field of non-zero characteristics.
\end{abstract}
\maketitle
\section{Introduction}\label{sec:intro}
We start with a basic result, coming from an application of the representation theory of finite groups over complex numbers. Let $G$ be a finite group.
Consider two homomorphisms $\phi_1,~\phi_2:G\longrightarrow\GL(n,\mathbb{C})$. Suppose that 
for all $g\in G$ there exists $h(g)\in G$, such that $\phi_1(g)=h(g)^{-1}\phi_2(g)h(g)^{-1}$. 
In this case, these two homomorphisms will be called \emph{element-conjugate homomorphisms}.
Note that here $h:G\longrightarrow G$ is a function. If there exists a choice of constant function $h$, we say that $\phi_1$ and $\phi_2$ are \emph{globally conjugate}. 
Now, note that if the above-mentioned maps $\phi_1,~\phi_2$ are element-conjugate, then they are globally conjugate. Indeed if they are element-wise conjugate, their corresponding character values are the same on each of the conjugacy classes of $G$.
Hence their characters are the same. Thus the two homomorphisms are globally conjugate. Now suppose we change $\GL(n,\mathbb{C})$ by $\SL(n,\mathbb{C})$ or $\textup{SO}(n,\mathbb{C})$. Then it is natural to ask
whether all element-conjugate homomorphisms are globally conjugate. At this point, we define a group $\Gamma$ to be \emph{acceptable} (or \emph{strongly acceptable}) if for all finite groups (resp. group) $G$, two element-conjugate homomorphisms are globally conjugate. 

The general question of the relation
between element-conjugacy and global conjugacy arises in many contexts such as algebra, number theory, and geometry. For example, the multiplicity-one question in the theory of automorphic forms (see \cite{Bl94}). A motivating question, related to multiplicity-one questions in the theory of automorphic forms, is when a compact group can be the common covering space of a pair of non-isometric isospectral manifolds (See \cite{Su85}). Motivated 
by this M. Larsen has dealt with many of the compact or complex simple Lie groups, viewed as real groups, after showing that the question reduces to the semisimple case in \cite{Mi94}. In a subsequent paper \cite{Mi96} he proved that a connected, simply connected, compact Lie group $G$ is acceptable if and only if it has no direct factors of type $B_n (n\geq4), D_n (n\geq 4),~ E_n, \text{ or } F_4$. This carries over to the complex groups as well. Moreover, the group of type $G_2$ is the only exceptional group (simply connected or not) which is acceptable. In the year 2016, Y. Fang, G. Han and B. Sun proved that the following two statements are equivalent:
\begin{enumerate}
    \item For all connected compact Lie groups $H$ and all continuous homomorphisms $\phi,\psi:H\longrightarrow G$, if $\phi(h)$ and $\psi(h)$ are conjugate in $G$ for all $h\in H$, then $\phi$ and $\psi$ are globally conjugate.
    \item The Lie
algebra of $G$ contains no simple ideal of type $D_n (n \geq4),~ E_6, ~E_7, \text{ or } E_8$.
\end{enumerate}
These results have strong applications in many parts of mathematics. For example how to 
determine whether a given representation is distinguished or not in terms of the Langlands 
parameters (see \cite{AnPr13}). Another question being that do the multiplicities of the 
trivial representation of $H$ in finite-dimensional irreducible representations of $G$ 
determine the conjugacy class of $H$, where $H$ is a closed subgroup of a complex reductive 
group $G$ (see \cite{Wa15}).

Consider a prime $p$ and the target group is considered to be one of the general linear groups, special 
linear groups, unitray groups, symplectic groups or the orthogonal groups  when the field is algebraically closed field $\F$ of characteristic $p$. We prove that all these groups are not acceptable. We are further looking into the cases for exceptional cases, which will be carried  out in future work.
\subsection*{Methodology} To work with the above-mentioned groups, we make use of their conjugacy classes. This has been adapted from the work of Green (\cite{Gr55}) and Macdonald (\cite{Ma81}) for the general linear groups and from the work of Wall (\cite{Wa63}) in the cases 
of unitary, symplectic and orthogonal groups. We will briefly mention these results in due course. The idea is to work with unipotent classes, for which we need to know the 
class representative for the same (see \cite{BuGi16}). The work \cite{GoLiO17} of Gonshaw, Liebeck, and O'Brien has been a rescuer to this. After understanding the conjugacy classes, we move to construct particular homomorphisms from particular groups to the target groups. For most of the cases, we have used the group $\Z/p\Z\oplus\Z/p\Z$ to be the source group. Without further delay, we will present the organization of the paper.
\subsection*{Organization of the paper} We start with some of the general results in \cref{sec:gen}. This section contains two very important results, viz. \cref{lem:product} and \cref{lem:centralizer} which have been the lifeline for most of the proofs. In \cref{sec:conju} we briefly mention the groups and their conjugacy classes. Thereafter \cref{sec:gen-spec-lin}, \cref{sec:unitary groups} and \cref{sec:symplectic-groups} are devoted to the main proof of the results concerning general and special linear groups, unitary groups, symplectic and orthogonal groups, respectively. Our main results are \cref{thm:gl-not-acc}, \cref{thm:unitary-not-acc}, \cref{th:symplectic-not-acc}, \cref{thm:OddOrth-not-acceptable}, \cref{thm:Orth-odd-not-acc}. We finish the paper by addressing a few future questions in \cref{sec:conclusion}.
\subsection*{Acknowledgment}
The author would like to thank professor Anupam Singh from IISER Pune for his helpful discussions. A part of this work was finished during his visit IIT-Jodhpur in December 2022. I would like to take this opportunity to thank the institute for its hospitality during the stay.

\section{General results}\label{sec:gen}
\begin{lemma}[Proposition 1.1, \cite{Mi94}]\label{lem:product}
    Let $G=G_1\times G_2$ for two groups $G_1,~G_2$. Then $G$ is acceptable if and only if $G_1$ and $G_2$ are acceptable.
\end{lemma}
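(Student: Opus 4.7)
The plan is to prove both directions by direct manipulation of the coordinates of a homomorphism into a product group, using the fact that element-conjugacy and global conjugacy both behave coordinate-wise with respect to the product decomposition.

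For the ``if'' direction, I would start with a finite group $H$ and two element-conjugate homomorphisms $\phi_1,\phi_2\colon H\to G_1\times G_2$. Writing $\phi_i(h)=(\phi_i^{(1)}(h),\phi_i^{(2)}(h))$, the hypothesis that for each $h\in H$ there exists $(a_1(h),a_2(h))\in G_1\times G_2$ conjugating $\phi_2(h)$ to $\phi_1(h)$ unpacks, coordinate by coordinate, into the assertion that $\phi_1^{(j)}$ and $\phi_2^{(j)}$ are element-conjugate as homomorphisms $H\to G_j$ for $j=1,2$. Invoking the acceptability of each $G_j$ yields constants $g_j\in G_j$ with $\phi_1^{(j)}=g_j^{-1}\phi_2^{(j)}g_j$, and then the single element $(g_1,g_2)\in G$ globally conjugates $\phi_1$ to $\phi_2$.

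For the ``only if'' direction, suppose $G=G_1\times G_2$ is acceptable, fix an index $j\in\{1,2\}$, and take element-conjugate homomorphisms $\phi_1,\phi_2\colon H\to G_j$. I would embed them into $G$ via $\tilde\phi_i(h)=(\phi_i(h),e)$ (placing the trivial element in the other coordinate, with an obvious relabelling if $j=2$). These lifts are element-conjugate in $G$, with the conjugators $(a(h),e)$ inherited from the original element-conjugators. Acceptability of $G$ then furnishes a single $(g_1,g_2)\in G$ globally conjugating $\tilde\phi_1$ and $\tilde\phi_2$, and reading off the $j$th coordinate gives an element of $G_j$ that globally conjugates $\phi_1$ and $\phi_2$.

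There is essentially no obstacle here: the entire content is that a product of groups has conjugacy classes that are products of conjugacy classes in the factors, so element-conjugacy and global conjugacy each split cleanly across the factors. The only care needed is in the ``only if'' direction, where one must pick the trivial-coordinate embedding and check that the lifted homomorphisms really are element-conjugate in the product; this is immediate. No additional hypothesis on $H$, $G_1$, or $G_2$ (such as finiteness of the targets) is used beyond what ``acceptable'' already encodes.
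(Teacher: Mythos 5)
Your proof is correct and follows essentially the same route as the paper: the forward direction uses that conjugacy in a product is coordinate-wise (the paper phrases this via the projections $\pi_j$), and the converse uses the coordinate embeddings $G_j\hookrightarrow G_1\times G_2$ and reads off the relevant coordinate of the global conjugator. Your write-up is in fact slightly more explicit than the paper's converse, which only gestures at the monomorphisms, but the argument is the same.
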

\begin{proof}
    Let $G_1$ and $G_2$ be two acceptable groups. Assume $\phi_1,\phi_2:H\longrightarrow G_1\times G_2$ be two homomorphisms which are element-conjugate. Then the maps $\pi_1\circ\phi_1,\pi_1\circ\phi_2:H\longrightarrow G_1$ are element conjugate, hence globally conjugate, via say $g_1$. Similarly there exists $g_2\in G_2$, via which $\pi_2\circ\phi_1,\pi_2\circ\phi_2:H\longrightarrow G_2$ are globally conjugate. Hence $\phi_1,\phi_2$ are globally conjugate.

    On the other hand considering the monomorphisms $i_1:G_1\longrightarrow G_1\times G_2$ and $i_2: G_2\longrightarrow G_1\times G_2$, we get that if $G_1\times G_2$ is acceptable, then so are $G_1$ and $G_2$.
\end{proof}
\begin{lemma}\label{lem:centralizer}
    Let $G$ be an acceptable group. Then for all $a\in G$ the subgroup $\ce_G(a)$, the centralizer of $a$ is acceptable. 
\end{lemma}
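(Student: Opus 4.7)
My plan is to deduce the acceptability of $\ce_G(a)$ from that of $G$ by enlarging the source of any given pair of element-conjugate homomorphisms so that the extended problem inside $G$ carries with it the constraint ``conjugate $a$ to itself.'' The global conjugator supplied by the acceptability of $G$ will then automatically lie in $\ce_G(a)$, which is exactly what is needed.

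Concretely, let $H$ be a finite group and let $\phi_1,\phi_2 : H \to \ce_G(a)$ be element-conjugate. I would form the group $\tilde H := H \times \langle a \rangle$---finite because in the classical-group setting of this paper every element of $G$ has finite order---and define $\tilde\phi_i : \tilde H \to G$ by $\tilde\phi_i(h, a^k) := \phi_i(h)\, a^k$. Since $\phi_i(H)$ commutes with $a$, this is a well-defined homomorphism. If $y(h) \in \ce_G(a)$ witnesses $y(h)\phi_1(h)y(h)^{-1} = \phi_2(h)$, then $y(h)$ commutes with $a^k$ as well, and a direct computation gives $y(h)\,\tilde\phi_1(h,a^k)\,y(h)^{-1} = \tilde\phi_2(h,a^k)$; thus $\tilde\phi_1$ and $\tilde\phi_2$ are element-conjugate in $G$.

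Applying the acceptability of $G$ to this pair now produces a single $g \in G$ with $g\,\tilde\phi_1(x)\,g^{-1} = \tilde\phi_2(x)$ for every $x \in \tilde H$. Specializing to $x=(1,a)$ yields $g a g^{-1} = a$, so $g \in \ce_G(a)$; specializing to $x=(h,1)$ yields $g\phi_1(h)g^{-1} = \phi_2(h)$ for all $h \in H$. This is precisely the desired global conjugacy of $\phi_1$ and $\phi_2$ inside $\ce_G(a)$, so $\ce_G(a)$ is acceptable.

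The one delicate point is the finiteness of $\tilde H$, i.e., that $\langle a\rangle$ is finite. This is automatic for the groups studied in this paper, since every element of $\GL(n,\F)$ has finite order; a fully general abstract formulation would have to restrict to $a$ of finite order or replace $\langle a\rangle$ by some finite-image surrogate. Beyond that, the argument is essentially a formal reduction, and I do not anticipate any further technical obstacle.
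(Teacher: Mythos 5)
Your proof is correct and follows essentially the same route as the paper: extend the homomorphisms to $H\times\langle a\rangle$ by $(h,a^k)\mapsto\phi_i(h)a^k$, obtain a global conjugator $g$ from the acceptability of $G$, and specialize to see that $g$ centralizes $a$ and conjugates $\phi_1$ to $\phi_2$. Your remark about the finiteness of $\langle a\rangle$ is a point the paper passes over silently, and your handling of it is appropriate.
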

\begin{proof}
    Let $\varphi_{1},\varphi_2:H\longrightarrow \ce_G(a)$ be two element conjugate homorphisms. Construct the following homomorphisms
    \begin{align}
        \widetilde{\varphi_i}:H\times\langle a \rangle \longrightarrow G,\text{ defined as }\widetilde{\varphi_i}(h,a^m)=\varphi(h)a^m.
    \end{align}
    Then the homomorphisms are element-conjugate and hence globally conjugate since $G$ is acceptable. Thus there exists $g\in G$ such that
    \begin{align*}
        g\varphi_1(h)a^mg^{-1}=\varphi_2(g)\text{ for all }m\in\mathbb{Z},h\in H.
    \end{align*}
    On plugging $m=0$ and $h=e_H$, we get that $g\in \ce_G(a)$. Hence we get the result by putting $m=0$.
\end{proof}
\begin{lemma}[See Lemma 2.3 of \cite{Yu22} for Lie group set-up]\label{lem:abelian-centralizer}
    Let $G$ be an acceptable group and $A$ be an abelian subgroup of $G$. Then $\ce_G(A)$ is an acceptable subgroup.
\end{lemma}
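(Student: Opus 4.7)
The approach is to reduce the statement to Lemma \ref{lem:centralizer} via the identity $\ce_G(A) = \bigcap_{a \in A} \ce_G(a)$. The clean case is when $A$ is finitely generated, say $A = \langle a_1, \dots, a_n \rangle$; here I would induct on $n$. The base case $n = 1$ is exactly Lemma \ref{lem:centralizer}. For the inductive step, set $C := \ce_G(\langle a_1, \dots, a_{n-1}\rangle)$, which is acceptable by the inductive hypothesis. Because $A$ is abelian, $a_n$ commutes with each of $a_1, \dots, a_{n-1}$, so $a_n \in C$; applying Lemma \ref{lem:centralizer} to the acceptable group $C$ and the element $a_n \in C$ yields that $\ce_C(a_n) = C \cap \ce_G(a_n) = \ce_G(A)$ is acceptable.

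For a general (possibly not finitely generated) abelian $A$, I would mimic the proof of Lemma \ref{lem:centralizer} directly. Given two element-conjugate homomorphisms $\varphi_1, \varphi_2 : H \to \ce_G(A)$, define
\[
\widetilde{\varphi_i} : H \times A \longrightarrow G, \qquad \widetilde{\varphi_i}(h, a) = \varphi_i(h)\, a.
\]
These are genuine homomorphisms, since $\varphi_i(H) \subseteq \ce_G(A)$ and $A$ is abelian, and they remain element-conjugate via the same conjugators used for $\varphi_1, \varphi_2$: those conjugators lie in $\ce_G(A)$ and therefore commute with every $a \in A$. Any single $g \in G$ globally conjugating $\widetilde{\varphi_1}$ to $\widetilde{\varphi_2}$ then automatically lies in $\ce_G(A)$ (specialise $h = e_H$) and globally conjugates $\varphi_1$ to $\varphi_2$ (specialise $a = e$), which is exactly what is required.

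The main obstacle is that $H \times A$ is generally not finite, so invoking acceptability of $G$ in the last step stretches the definition in the same way that the proof of Lemma \ref{lem:centralizer} already does whenever $a$ has infinite order. In the Lie-group setting of \cite{Yu22} this is smoothed over by compactness; in the purely algebraic setting, one may bypass the issue by an inverse-limit argument, observing that for every finitely generated $A_0 \le A$ the set of $g \in G$ globally conjugating $\varphi_2$ to $\varphi_1$ and lying in $\ce_G(A_0)$ is a nonempty coset of $\ce_G(A_0) \cap \ce_G(\varphi_2(H))$ (by the finitely generated case), so that the nested intersection of these cosets over all such $A_0$ produces the required conjugator in $\ce_G(A)$.
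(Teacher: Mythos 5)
Your second paragraph is essentially the paper's own proof verbatim: form $\widetilde{\varphi_i}\colon H\times A\to G$, $(h,a)\mapsto \varphi_i(h)a$, use element-conjugacy inherited from conjugators lying in $\ce_G(A)$, then specialise $h=e_H$ to force the global conjugator into $\ce_G(A)$ and $a=e_A$ to conclude. So the core of the proposal is correct and matches the paper. Your first reduction, inducting on a generating set of $A$ and repeatedly applying \cref{lem:centralizer} to the acceptable group $\ce_G(\langle a_1,\dots,a_{n-1}\rangle)$, is a genuinely different and perfectly valid route for finitely generated $A$; its advantage is that it only ever invokes acceptability of $G$ for source groups of the form $H\times\langle a\rangle$ rather than $H\times A$, which is a milder use of the hypothesis. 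You are also right that both proofs quietly apply acceptability (defined in this paper with finite source groups) to $H\times A$, which need not be finite; the same remark applies to the paper's proof of \cref{lem:centralizer} when $a$ has infinite order. In every application in this paper $G$ is a linear group over $\overline{\mathbb{F}_q}$, where every element and every relevant abelian subgroup is finite, so the issue is vacuous in context.

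One caution on your proposed patch for non-finitely-generated $A$: a nested (or directed) family of nonempty cosets of decreasing subgroups in an abstract group can have empty intersection (already in $\mathbb{Z}$ one can arrange cosets $a_n+2^n\mathbb{Z}$ converging to a non-integral $2$-adic limit), so the inverse-limit step needs an additional finiteness or compactness hypothesis to go through. As stated it does not close the gap in full generality; but since the paper's own proof has the same limitation and the lemma is only ever used with finite $A$, this does not affect the results.
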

\begin{proof}
    Let $\varphi_{1},\varphi_2:H\longrightarrow \ce_G(A)$ be two element conjugate homomorphisms. Construct the following homomorphisms
    \begin{align}
        \widetilde{\varphi_i}:H\times A \longrightarrow G,\text{ defined as }\widetilde{\varphi_i}(h,a)=\varphi(h)a.
    \end{align}
    Then the homomorphisms are element-conjugate and hence globally conjugate since $G$ is acceptable. Thus there exists $g\in G$ such that
    \begin{align*}
        g\varphi_1(h)ag^{-1}=\varphi_2(g)\text{ for all }a\in A,h\in H.
    \end{align*}
    Assuming $h=e_H$, we get that $g\in \ce_G(A)$. Then taking $a=e_A$, we get that $\varphi_1$ and $\varphi_2$ are globally conjugate via $g$.
\end{proof}
\section{Conjugacy classes in the groups}\label{sec:conju}
In this section, we will discuss the classical groups in brief and present the description of their conjugacy classes. This will be based on the works \cite{Gr55}, \cite{Ma81} and \cite{Wa63}.
\subsection{General linear group} The general linear group $\GL(n,\F)$ is defined to be the set of all $n\times n$ matrices defined over $\F$. The conjugacy class of this group was firstly determined in works \cite{Gr55} and \cite{Ma81} in the case of finite fields. Let $\Phi$ denote the set of all non-constant, monic, irreducible polynomials $f (x)$  with coefficients in $\F$ which is not equal to the polynomial $x$. Let $\Lambda$ denote the set of all partitions $\lambda=(\lambda_1,\lambda_2,\ldots,\lambda_k)$ with $\lambda_i\geq\lambda_{i+1}\geq 0$ and each $\lambda$ are integers.  The conjugacy class of $\GL(n, \F)$ is in one-one correspondence with a set of all functions
$\alpha:\Phi\longrightarrow\Lambda$ which takes the value empty partition on all but finitely many polynomials in $\Phi$,
and satisfies 
\begin{align*}
\sum\limits_{f\in\Phi}|\alpha(f)|\deg f=n.   
\end{align*} 
Hence the conjugacy classes of  $\GL(n,q)$ are determined by combinatorial data consisting of tuples of the form $(f,\lambda_f)$ where $f\in\Phi$. We note that two matrices in $\GL(n,q)$ are conjugate if and only if they are conjugate in $\GL(n,\F)$, which we will use in \autoref{sec:conclusion}.
\subsection{Unitary group}
For the field $\F$, consider an involution $\sigma$ of $\F$. This induces an automorphism on the ring $\mathbb \F [x]$ and the group $\GL(n,\F)$. The \emph{unitary} group is the group of all matrices in $\GL(n,\F)$ which satisfy $^t\overline{A}JA=J$, where $J$ is given by $\begin{pmatrix}
    ~&&&1\\
    &&1&\\
    &\reflectbox{$\ddots$}&&\\
    1&&&
\end{pmatrix}$ (matrix of a hermitian form) and this group will be denoted by $\U(n,\F)$. 
\begin{definition}
The \textit{twisted dual} of a monic degree $r$ polynomial $f(x)\in \F[x]$ satisfying $f(0)\neq 0$, is the polynomial given by $\widetilde{f}(x)=\overline{f(0)}^{-1}x^r\overline{f}(x^{-1})$.
The polynomial $f$ will be called \emph{$\sim$-symmetric} if $f=\widetilde{f}$. A monic polynomial $f(x)\in \F[x]$, will be called to be \textbf{$\sim$-irreducible}
 if and only if it does not have any proper $\sim$-symmetric factor.  We will denote the set of all $\sim$-irreducible monic polynomials by $\widetilde{\Phi}$
\end{definition}
We know that elements $g,h\in\U(n,\F)$ are conjugate in $\U(n,q)$ if and only if they are conjugate in $\GL(n,\F)$ (see \cite{BuGi16}) and the conjugacy classes in $\U(n,\F)$ are parametrized by
sequences $\{(f,\lambda)|f\in\widetilde{\Phi},\lambda\in\Lambda\}$ is the set of irreducible polynomials. The class representative for the conjugacy classes can be found in \cite{ta3}.
\subsection{Symplectic group and Orthogonal group}
For a vector space $V$ of dimension $2n$ over $\F$, consider the unique (up to similarity) non-degenerate alternating bilinear form on $V$ given by 
$$\left<(x_i)_{i=1}^{2n}, (y_j)_{j=1}^{2n} \right> = \sum_{j=1}^{n}x_jy_{2n+1-j}-\sum_{i=0 } ^{n-1}x_{2n-i}y_{i+1}.$$ 
The symplectic group is the subgroup of $\GL(2n,\F)$ consisting of elements preserving this alternating form on $V$. By fixing an appropriate basis, the matrix of the form can be chosen to be $J= \begin{pmatrix} 0 & \Lambda_n\\ -\Lambda_n & 0 \end{pmatrix}$ where 
$\Lambda_n=\begin{pmatrix} 0 & 0 & \cdots & 0 & 1\\
0 & 0 & \cdots & 1 & 0\\ \vdots & \vdots & \reflectbox{$\ddots$} & \vdots & \vdots\\
1 & 0 & \cdots & 0 & 0 \end{pmatrix}$ and 
$$\SP(2n,\F) =\{A \in \GL(2n,\F) \mid \tr AJA=J\}.$$ 

Now consider $V$ to be an $m$-dimensional vector space over the field $\F$. Then there a unique (up to similarity) non-degenerate quadratic forms on $V$. The orthogonal group consists of elements of $\GL(m,\F)$ which preserve a non-degenerate quadratic form $Q$. 
 With respect to an appropriate basis, we will fix the matrices of the symmetric bilinear forms (associated with the quadratic forms) as follows: 
$$J_{o}=\begin{pmatrix} 0 & 0 & \Lambda_n \\ 0 & \alpha & 0\\ \Lambda_n & 0 & 0 
\end{pmatrix},  J_{e}=\begin{pmatrix} 0& \Lambda_n\\ \Lambda_n & 0
\end{pmatrix}$$ 
where $\alpha\in\fq^\times$, and $\Lambda_l =\begin{pmatrix}
0 & 0 & \cdots & 0 & 1\\ 0 & 0 & \cdots & 1 & 0\\
\vdots & \vdots & \reflectbox{$\ddots$} & \vdots & \vdots\\
1 & 0 & \cdots & 0 & 0 \end{pmatrix}$, an $l\times l$ matrix. Then, the orthogonal group in matrix form is 
$$\Or(2m, \F) = \{A \in \GL(2m,\F)\mid \tr{A}J_{e}A=J_{e}\},$$
$$\Or(2m+1,\F)= \{A\in\GL(2m,\F)\mid \tr{A}J_{o}A=J_{o}\}.$$
To describe the conjugacy classes of finite symplectic and orthogonal groups, we will need the concept of \emph{self-reciprocal polynomials, symplectic and orthogonal partitions}, which are defined below. For examples of the same, the readers are suggested to have a look at \cite{BuGi16}. 
\begin{definition}
The \textit{dual} of a monic degree $r$ polynomial $f(x)\in k[x]$ satisfying $f(0)\neq 0$, is the polynomial given by $f^*(x)=f(0)^{-1}x^rf(x^{-1})$. 
The polynomial $f$ will be called \emph{self reciprocal} if $f=f^*$. A monic polynomial $f(x)\in \fq[x]$, will be called to be \textbf{$*-$irreducible}
 if and only if it does not have any proper self-reciprocal factor.
\end{definition}
\begin{definition}
A \textbf{symplectic partition} is a partition of a number $k$, such that the odd parts 
have even multiplicity. The set
of all symplectic partitions will be denoted as $\ssp'_{\SP}$.
\end{definition}
\begin{definition}
An \textbf{orthogonal partition} is a partition of a number $k$, such that all even parts have even multiplicity. The set
of all orthogonal signed partitions will be denoted as $\ssp'_{\Or}$.
\end{definition}
It can be shown that the characteristic polynomial of a symplectic or orthogonal matrix is self-reciprocal. We follow 
J. Milnor's terminology \cite{mi} to distinguish between the $*$-irreducible factors of the characteristic polynomials. We call a $*$-irreducible polynomial $f$ to be

\noindent(1) Type $1$\index{Type $1$ polynomial} if $f=f^*$ and $f$ is irreducible polynomial of even degree;\

\noindent(2) Type $2$\index{Type $2$ polynomial} if $f=gg^*$ and $g$ is irreducible polynomial satisfying $g\neq g^*$;\

\noindent(3) Type $3$\index{Type $3$ polynomial} if $f(x)=x\pm 1$.\

According to \cite{Wa63} the conjugacy classes of $\SP(2n,\F)$ are parameterized by the functions 
$\lambda:\Phi\rightarrow\pa^{2n}\cup\ssp_{\SP}^{2n}$, where $\Phi$ denotes the set of all
 monic,  non-constant, irreducible polynomials, $\pa^{2n}$ is
 the set of all partitions 
of $1\leq k\leq 2n$ and $\ssp_{\SP}^{2n}$ is the set of all symplectic partitions 
of $1\leq k\leq 2n$. Such a $\lambda$ represent a conjugacy class of $\SP$
if and only if \begin{enumerate}
\itemindent=-13pt
\item $\lambda(x)=0$,
\item $\lambda_{\varphi^*}=\lambda_\varphi$,
\item $\lambda_\varphi\in\ssp^n_{\SP}$ iff $\varphi=x\pm 1$,
\item $\displaystyle\sum_{\varphi}|\lambda_\varphi|\textup{deg}(\varphi)=2n$.
\end{enumerate}
Also from \cite{Wa63}, we find out that a similar kind of statement is true for the groups $\Or(n,\F)$. The conjugacy classes of $\Or(n,\F)$ are parameterized by the functions 
$\lambda:\Phi\rightarrow\pa^{n}\cup\ssp_{\Or}^{n}$, where $\Phi$ denotes the set of all
 monic,  non-constant, irreducible polynomials, $\pa^{n}$ is
 the set of all partitions 
of $1\leq k\leq n$ and $\ssp_{\Or}^{n}$ is the set of all orthogonal partitions 
of $1\leq k\leq n$. Such a $\lambda$ represent a conjugacy class of $\Or(n,\F)$
if and only if \begin{enumerate}
\itemindent=-13pt
\item $\lambda(x)=0$,
\item $\lambda_{\varphi^*}=\lambda_\varphi$,
\item $\lambda_\varphi\in\ssp^n_{\Or}$ iff $\varphi=x\pm 1$,
\item $\displaystyle\sum_{\varphi}|\lambda_\varphi|\textup{deg}(\varphi)=n$.
\end{enumerate}
Class representative corresponding to given data can be found in \cite{ta1}, \cite{ta2}, \cite{GoLiO17} 
and we will mention them whenever needed. 
\section{General and special linear groups}\label{sec:gen-spec-lin}
\begin{lemma}\label{lem:gl-smaller-case}
For a field $\F$, if $\GL(n+1,\F)$ is acceptable, then so is $\GL(n,\F)$.
\end{lemma}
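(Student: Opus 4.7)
The plan is to realize $\GL(n,\F)$ as a direct factor of a centralizer inside $\GL(n+1,\F)$, so that acceptability can be pushed down through the two general tools already proved, namely \cref{lem:centralizer} and \cref{lem:product}.

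First, I would fix any scalar $\lambda\in\F^\times$ with $\lambda\neq 1$; such a $\lambda$ certainly exists because $\F$ is algebraically closed, hence infinite. Set
\[
a=\diag(1,1,\ldots,1,\lambda)\in\GL(n+1,\F).
\]
Since $a$ has exactly two distinct eigenvalues, $1$ with multiplicity $n$ and $\lambda$ with multiplicity $1$, a direct block computation shows that any matrix commuting with $a$ must preserve the two eigenspaces, so
\[
\ce_{\GL(n+1,\F)}(a)=\left\{\begin{pmatrix} A & 0 \\ 0 & c\end{pmatrix}\ :\ A\in\GL(n,\F),\ c\in\F^\times\right\}\cong\GL(n,\F)\times\GL(1,\F).
\]

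Now assume $\GL(n+1,\F)$ is acceptable. Applying \cref{lem:centralizer} to $a\in\GL(n+1,\F)$, the centralizer $\ce_{\GL(n+1,\F)}(a)$ is acceptable. Since this centralizer is a direct product $\GL(n,\F)\times\GL(1,\F)$, \cref{lem:product} implies that each direct factor is acceptable, and in particular $\GL(n,\F)$ is acceptable. This is the desired conclusion.

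There is essentially no obstacle here beyond the bookkeeping of choosing $a$ so that its centralizer decomposes as the correct product: the whole argument is a one-line application of the two preparatory lemmas of \cref{sec:gen}.
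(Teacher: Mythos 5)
Your proof is correct and follows essentially the same route as the paper: the paper takes $a=\diag(I_n,-1)$, computes $\ce_{\GL(n+1,\F)}(a)\cong\GL(n,\F)\times\F^*$, and applies \cref{lem:centralizer} and \cref{lem:product} exactly as you do. Your choice of a general $\lambda\neq 1$ in place of $-1$ is in fact a small improvement, since it also covers characteristic $2$, where $-1=1$ and the paper's element degenerates to the identity.
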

\begin{proof}
We are going to use \cref{lem:centralizer}. Take the following element
\begin{align*}
    a=\begin{pmatrix}
    I_{n}&0\\
    0&-1
    \end{pmatrix}\in\GL(n+1,\F).
\end{align*}
Then we know that
\begin{align*}
    \ce_{\GL(n+1,\F)}(a)=&\left\{\begin{pmatrix}A&0\\0&\alpha\end{pmatrix}\middle\vert\substack{A\in\GL(n,\F)\\\alpha\in\F^*}\right\}\\
    \cong &\GL(n,\F)\times\F^*.
\end{align*}
Hence using \cref{lem:product}, we conclude that $\GL(n,\F)$ is acceptable.
\end{proof}
\begin{proposition}\label{prop:gl2-not-acc}
For $q\geq 5$ the group $\GL(2,\F)$ is not acceptable.
\end{proposition}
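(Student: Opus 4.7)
The plan is to disprove acceptability by writing down explicitly a pair of element-conjugate but not globally conjugate homomorphisms from the finite group $H=\Z/p\Z\oplus\Z/p\Z$ into $\GL(2,\F)$, following the methodology flagged in the introduction. With $a,b$ the standard generators of $H$ and $E=\begin{pmatrix}0&1\\0&0\end{pmatrix}$ the standard nilpotent of $\GL(2,\F)$, I would exploit that $E^{2}=0$: every $I+\gamma E$ is then unipotent of order dividing $p$, and any two such matrices commute, so any assignment $a\mapsto I+\mu E$, $b\mapsto I+\nu E$ automatically extends to a group homomorphism $H\to \GL(2,\F)$.

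Next I would pick two distinct scalars $\alpha,\beta\in\F\setminus\mathbb{F}_{p}$; these are furnished already by $\mathbb{F}_{p^{2}}\subseteq\F$, which contains $p^{2}-p\ge 2$ such elements (so the hypothesis $q\ge 5$ is more than enough). Setting
\[
\phi_{1}(a)=\phi_{2}(a)=I+E,\quad \phi_{1}(b)=I+\alpha E,\quad \phi_{2}(b)=I+\beta E,
\]
I would then verify that $\phi_{1}$ and $\phi_{2}$ are element-conjugate. The content here—the step carrying most of the weight—is that $\phi_{k}(ia+jb)$ equals $I$ if and only if its coefficient vanishes; because $\alpha,\beta\notin\mathbb{F}_{p}$ the vanishing condition $i+j\alpha=0$ (respectively $i+j\beta=0$) with $i,j\in\mathbb{F}_{p}$ reduces to $i=j=0$, and two unipotents in $\GL(2,\F)$ are conjugate as soon as they are simultaneously trivial or simultaneously nontrivial.

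To close the argument I would show that global conjugacy forces $\alpha=\beta$. Any intertwiner $g\in\GL(2,\F)$ of $\phi_{1}$ and $\phi_{2}$ must centralize $\phi_{1}(a)=I+E$, so $g$ is upper-triangular with equal diagonal entries; a direct computation shows every such $g$ commutes with $E$ itself, and hence sends $I+\alpha E$ to itself rather than to $I+\beta E=\phi_{2}(b)$, contradicting $\alpha\neq\beta$.

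The main obstacle—and the actual source of non-acceptability here—is the choice of $\alpha,\beta$ outside the prime field. Had the scalars been in $\mathbb{F}_{p}$, one could find a non-zero $(i,j)\in\mathbb{F}_{p}^{2}$ with $i+j\alpha=0$ but $i+j\beta\neq 0$, and element-conjugacy would already fail at that element of $H$. Pushing $\alpha,\beta$ outside $\mathbb{F}_{p}$ eliminates this cancellation and makes the two homomorphisms agree pointwise up to conjugacy, while the rigidity of the centralizer of $I+E$ then prevents any global intertwiner from existing.
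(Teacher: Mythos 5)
Your proposal is correct and follows essentially the same approach as the paper: two homomorphisms from $\Z/p\Z\oplus\Z/p\Z$ into the unipotent upper-triangular subgroup of $\GL(2,\F)$, element-conjugate because any two nontrivial transvections are conjugate, yet not globally conjugate because an intertwiner would have to lie in the centralizer of $I+E$, which fixes every $I+\gamma E$. If anything, your explicit requirement that $\alpha,\beta\notin\mathbb{F}_p$ is more careful than the paper's hypothesis $a^2\neq b^2$ on the two off-diagonal entries, since when the ratio of the two scalars lies in $\mathbb{F}_p^*$ and is not $\pm1$ there is a nonzero $(i,j)\in\mathbb{F}_p^2$ at which one homomorphism is trivial and the other is not, so element-conjugacy would fail.
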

\begin{proof}
    Let $a\neq b$ be two  nonzero elements of $\F$ of order $p$, where $q=p^f$. Further, assume that $a^2\neq b^2$. Note that such a choice exists since $x^2=1$ has less than or equal to two solutions in $\F$. Consider the following two homomorphisms
    \begin{align*}
        &\varphi_1,\varphi_2:\Z/p\Z\oplus\Z/p\Z\longrightarrow\GL(2,\F)\\
        \text{defined as }&\varphi_1(1,0)=\begin{pmatrix}
            1&a\\0&1
        \end{pmatrix}
        ,\varphi_1(0,1)=\begin{pmatrix}
            1&b\\0&1
        \end{pmatrix}\\
        &\varphi_2(1,0)=\begin{pmatrix}
            1&b\\0&1
        \end{pmatrix},
        \varphi_2(0,1)=\begin{pmatrix}
            1&a\\0&1
        \end{pmatrix}.
    \end{align*}
    Then we know that these two homomorphisms are element-conjugate. We now show that these two homomorphisms are not globally conjugate. On the contrary, assume that they are. So there is a matrix $\begin{pmatrix}
        p&q\\r&s
    \end{pmatrix}\in \GL(2,\F)$ acting as a global conjugator. Hence we have the following equations:
    \begin{align*}
        &\begin{pmatrix}
            p&q\\r&s
        \end{pmatrix}
        \begin{pmatrix}
            1&a\\0&1
        \end{pmatrix}
        =\begin{pmatrix}
            1&b\\0&1
        \end{pmatrix}
        \begin{pmatrix}
            p&q\\r&s
        \end{pmatrix}\\
\text{and }&\begin{pmatrix}
            p&q\\r&s
        \end{pmatrix}
        \begin{pmatrix}
            1&b\\0&1
        \end{pmatrix}
        =\begin{pmatrix}
            1&a\\0&1
        \end{pmatrix}
        \begin{pmatrix}
            p&q\\r&s
        \end{pmatrix}.
    \end{align*}
    Solving the polynomial equations obtained by comparing the coefficients, we get that $r=s=0$. This implies that there doesn't exist any matrix playing the role of a global conjugator. Hence the conclusion follows.
\end{proof}
\begin{theorem}\label{thm:gl-not-acc}
Let $p$ be a prime and $q=p^f$. For $q>5$ the group $\GL(n,\F)$ is not acceptable for all $n\in \mathbb{N}$.
\end{theorem}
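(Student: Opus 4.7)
The plan is a straightforward induction on $n$, with \cref{prop:gl2-not-acc} providing the base case and the contrapositive of \cref{lem:gl-smaller-case} driving the induction step. Since $q > 5$ implies $q \geq 5$, the hypothesis of \cref{prop:gl2-not-acc} is satisfied and $\GL(2, \F)$ is not acceptable; this anchors the induction at $n = 2$.

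For the induction step, I fix $n \geq 2$ and assume as inductive hypothesis that $\GL(n, \F)$ has been shown to be non-acceptable. \cref{lem:gl-smaller-case} asserts that acceptability of $\GL(n+1, \F)$ forces acceptability of $\GL(n, \F)$; taking the contrapositive immediately propagates non-acceptability from $n$ to $n+1$, and iterating yields the conclusion for all $n \geq 2$.

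A small caveat worth flagging is that $\GL(1, \F) = \F^\times$ is abelian, and consequently trivially acceptable: any two element-conjugate homomorphisms into an abelian group must actually coincide pointwise. The statement is therefore to be read with the implicit restriction $n \geq 2$, since otherwise $n = 1$ would furnish a counterexample.

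There is no serious obstacle to executing this plan: all the genuine content has already been absorbed into \cref{prop:gl2-not-acc}, which exhibits an explicit pair of element-conjugate but not globally conjugate homomorphisms $\Z/p\Z \oplus \Z/p\Z \to \GL(2, \F)$, and into \cref{lem:gl-smaller-case}, which identifies the centralizer of $\diag(I_n, -1)$ with $\GL(n, \F) \times \F^\times$ and applies \cref{lem:product} together with \cref{lem:centralizer}. The present theorem is a clean inductive corollary of these two ingredients.
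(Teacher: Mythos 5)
Your proof is correct and is essentially identical to the paper's: the theorem is deduced by anchoring at \cref{prop:gl2-not-acc} and propagating non-acceptability upward via the contrapositive of \cref{lem:gl-smaller-case}. Your remark that $n=1$ must be excluded (since $\GL(1,\F)=\F^\times$ is abelian, hence acceptable) is a valid observation about the statement as written, which the paper leaves implicit.
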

\begin{proof}
    Follows from \cref{prop:gl2-not-acc} and \cref{lem:gl-smaller-case}.
\end{proof}
\begin{corollary}\label{cor:sl-odd}
The group $\SL(2n+1,\F)$ is not acceptable for all  $n\in\mathbb{N}$.
\end{corollary}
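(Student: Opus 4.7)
The plan is to reduce the corollary to the failure of acceptability of $\GL(2,\F)$ proved in \cref{prop:gl2-not-acc}, by exhibiting an element $a\in\SL(2n+1,\F)$ whose $\SL(2n+1,\F)$-centralizer has $\GL(2,\F)$ as a direct factor, and then chaining \cref{lem:centralizer} with \cref{lem:product}.

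To carry this out, I would choose pairwise distinct scalars $\lambda,\mu_1,\ldots,\mu_{2n-1}\in\F^*$ satisfying the determinant relation $\lambda^2\mu_1\cdots\mu_{2n-1}=1$. This is possible because $\F$ is infinite: fix any $2n-1$ distinct elements $\lambda,\mu_1,\ldots,\mu_{2n-2}\in\F^*$ and define $\mu_{2n-1}=\lambda^{-2}\mu_1^{-1}\cdots\mu_{2n-2}^{-1}$; the condition that $\mu_{2n-1}$ coincide with some previously chosen value excludes only finitely many starting configurations, so a valid choice certainly exists. Setting
\begin{align*}
a=\operatorname{diag}(\lambda,\lambda,\mu_1,\ldots,\mu_{2n-1})\in\SL(2n+1,\F),
\end{align*}
the eigenvalue $\lambda$ appears with multiplicity two and each $\mu_i$ with multiplicity one. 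The centralizer of $a$ in $\GL(2n+1,\F)$ is therefore the block-diagonal subgroup $\GL(2,\F)\times(\F^*)^{2n-1}$, and intersecting with $\SL$ eliminates exactly one coordinate via the relation $\det(A)\,\alpha_1\cdots\alpha_{2n-1}=1$, yielding
\begin{align*}
\ce_{\SL(2n+1,\F)}(a)\;\cong\;\GL(2,\F)\times(\F^*)^{2n-2}.
\end{align*}

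Assuming $\SL(2n+1,\F)$ is acceptable, \cref{lem:centralizer} would force the above centralizer to be acceptable, and then \cref{lem:product} would force each of its direct factors—in particular $\GL(2,\F)$—to be acceptable, contradicting \cref{prop:gl2-not-acc}. The case $n=1$ is the tightest, since there the torus factor degenerates and the centralizer is exactly $\GL(2,\F)$; for $n\ge 2$ one has genuine extra torus factors, which are harmless because $\F^*$ is abelian and hence acceptable. I do not anticipate any substantive obstacle: the element $a$ is completely explicit, the centralizer description is a routine eigenspace computation, and the two general lemmas of \cref{sec:gen} carry the structural load. The only minor verification is the existence of the prescribed scalars in $\F^*$, which is immediate over an algebraically closed field.
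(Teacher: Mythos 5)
Your proof is correct, and it follows the same overall strategy as the paper---choose an element of $\SL(2n+1,\F)$, apply \cref{lem:centralizer} to its centralizer, and reduce to the non-acceptability of a general linear group---but with a genuinely different choice of element and hence a different reduction. The paper takes $a=\diag(-I_{2n},1)$, whose centralizer in $\SL(2n+1,\F)$ is isomorphic to $\GL(2n,\F)$, and then invokes the full \cref{thm:gl-not-acc}; you take a diagonal element with exactly one doubled eigenvalue, obtain $\ce_{\SL(2n+1,\F)}(a)\cong\GL(2,\F)\times(\F^*)^{2n-2}$, and need only \cref{lem:product} together with the base case \cref{prop:gl2-not-acc}. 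Your route buys two things. First, it bypasses the inductive chain \cref{lem:gl-smaller-case}/\cref{thm:gl-not-acc} entirely, resting only on the explicit rank-two computation. Second, it is insensitive to the characteristic: in characteristic $2$ the paper's element $\diag(-I_{2n},1)$ degenerates to the identity, so its centralizer carries no information, whereas your element exists for any $p$ because $\F^*$ is infinite. The one point requiring care---choosing $\lambda,\mu_1,\ldots,\mu_{2n-1}$ pairwise distinct subject to $\lambda^2\mu_1\cdots\mu_{2n-1}=1$, so that the $\lambda$-eigenspace has dimension exactly two and the centralizer is as claimed---you address adequately, and the degenerate case $n=1$ (where the torus factor disappears and the centralizer is exactly $\GL(2,\F)$) is handled correctly.
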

\begin{proof}
    We are going to use \cref{lem:centralizer}. Assume if possible $\SL(2n+1,\F)$ to be acceptable. Choose
    \begin{align*}
        a=\begin{pmatrix}
            -I_{2n}&0\\
            0&1
        \end{pmatrix}\in\SL(2n+1,\F).
    \end{align*}
    Then we get that \begin{align*}
        \ce_{\SL(2n+1,\F)}(a)&=\left\{\begin{pmatrix}A&0\\0&\alpha\end{pmatrix}\middle\vert\substack{A\in\GL(n,\F)\\\alpha\in\F^*\\\det A\cdot\alpha=1}\right\}\\
    \cong &\left\{\begin{pmatrix}A&0\\0&\det A^{-1}\end{pmatrix}\middle\vert A\in\GL(n,\F)\right\}\\
    \cong&\GL(2n,\F).
    \end{align*}
    Hence the result follows from \cref{lem:centralizer} and \cref{thm:gl-not-acc}.
\end{proof}
\begin{proposition}\label{prop:sl-not-accep-power-map}
    Let the power map $\psi_n:\F\longrightarrow\F$ defined as $\psi_n(x)=x^n$ be a surjection. Then $\SL(n,\F)$ is not acceptable.
\end{proposition}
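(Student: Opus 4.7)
The plan is to reuse the element-conjugate homomorphisms $\varphi_1,\varphi_2$ built in \cref{prop:gl2-not-acc}, observing that their images already sit in $\SL(2,\F)$ (the matrices involved are unipotent of determinant one), and to promote them to $\SL(n,\F)$ by block-diagonal embedding. Concretely, I define
\begin{align*}
    \widetilde{\varphi_i}:\Z/p\Z\oplus\Z/p\Z\longrightarrow \SL(n,\F),\qquad
    \widetilde{\varphi_i}(x)=\begin{pmatrix}\varphi_i(x)&0\\0&I_{n-2}\end{pmatrix},
\end{align*}
and then verify that $\widetilde{\varphi_1},\widetilde{\varphi_2}$ are element-conjugate in $\SL(n,\F)$ but not globally conjugate there.

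For element-conjugacy, I fix $x$ and observe that $\widetilde{\varphi_1}(x)$ and $\widetilde{\varphi_2}(x)$ either both equal $I_n$ or are both unipotent with a single Jordan block of size $2$ at the top and $I_{n-2}$ below; in either case they are conjugate in $\GL(n,\F)$ by some $h$. Setting $\delta=\det h$, the surjectivity of $\psi_n$ supplies $\mu\in\F^*$ with $\mu^n=\delta^{-1}$; since scalars commute with everything, $\mu h$ is still a conjugator and now has determinant $1$, so element-conjugacy passes to $\SL(n,\F)$.

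For non-global-conjugacy I would suppose on the contrary that some $g\in\SL(n,\F)$ satisfies $g\widetilde{\varphi_1}(x)g^{-1}=\widetilde{\varphi_2}(x)$ for every $x$, and write $g=\begin{pmatrix}A&B\\C&D\end{pmatrix}$ with $A$ of size $2\times 2$. Expanding the conjugation relation block by block gives
\begin{align*}
A\varphi_1(x)=\varphi_2(x)A,\qquad \varphi_2(x)B=B,\qquad C\varphi_1(x)=C,\qquad D=D.
\end{align*}
Since each $\varphi_j(x)$ is a non-trivial upper-unipotent $2\times 2$ matrix with nonzero $(1,2)$-entry, the middle two equations force the second row of $B$ and the first column of $C$ to vanish. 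For the $A$-equation I would redo the coefficient comparison of \cref{prop:gl2-not-acc} while \emph{dropping} the assumption that $A$ is invertible: the resulting identities $pa=bs$ and $pb=as$, together with $a^2\neq b^2$, rule out both $p=s$ and $p=-s$ apart from $p=s=0$, so $A=\begin{pmatrix}0&\ast\\0&0\end{pmatrix}$. Stacking the three vanishings, the entire first column of $g$ is zero, contradicting $g\in\SL(n,\F)$.

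The main obstacle is the last step: one has to revisit the $2\times 2$ calculation of \cref{prop:gl2-not-acc} without the a priori invertibility of $A$, and then carefully combine the block-wise vanishings into a null column of $g$. Apart from this the argument is a routine block-matrix lift of the two-dimensional construction, with the surjectivity of $\psi_n$ entering exactly once, namely to upgrade the $\GL$-conjugators to $\SL$-conjugators in the element-conjugacy step.
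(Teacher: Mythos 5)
Your argument is correct (for $n\geq 2$, which is clearly the intended range) but it takes a genuinely different route from the paper's. The paper argues by reduction: assuming $\SL(n,\F)$ acceptable, it rescales an arbitrary element-conjugate pair $\varphi_1,\varphi_2:G\to\GL(n,\F)$ by an $n$-th root of the determinant to land in $\SL(n,\F)$, deduces global conjugacy there, and transports it back to contradict \cref{thm:gl-not-acc}. That route requires $x\mapsto\psi_n^{-1}(\det\varphi_i(x))$ to be a well-defined homomorphism, i.e.\ it genuinely uses injectivity of $\psi_n$ (the paper asserts ``$\psi_n$ is a bijection'' although only surjectivity is hypothesised, and over $\F=\overline{\mathbb F_q}$ the map $\psi_n$ is always surjective but injective only when $n$ is a power of $p$). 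Your direct construction sidesteps this: surjectivity enters exactly once, to extract a single $n$-th root $\mu$ of $\det(h)^{-1}$, which is unproblematic; and your block computation for non-global conjugacy (the identities $pa=bs$, $pb=as$ forcing $p^2=s^2$ and hence $p=s=r=0$, together with $C\varphi_1(x)=C$ killing the first column of $C$, so that the first column of $g$ vanishes and $\det g=0$) is complete and correct. What your approach buys is an unconditional and self-contained proof that $\SL(n,\F)$ is not acceptable for every $n\geq 2$, which subsumes \cref{cor:sl-odd}, \cref{prop:sl4q-not-acc} and \cref{cor:sp2-not-acc}; what the paper's approach buys is generality in the source (it would apply to any field over which $\psi_n$ is bijective, without an explicit witness). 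One caveat you inherit from \cref{prop:gl2-not-acc}: for $\varphi_1,\varphi_2$ to be element-conjugate at every $(m,n)\in\Z/p\Z\oplus\Z/p\Z$ one needs $ma+nb=0\iff mb+na=0$, which holds precisely when $a,b$ are $\mathbb F_p$-linearly independent (available since $\F$ is infinite); the stated conditions $a\neq b$, $a^2\neq b^2$ alone do not guarantee this, but that is an issue with the cited proposition, not with your lift, and your ``both identity or both a single Jordan block of size $2$'' dichotomy is valid once $a,b$ are so chosen.
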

\begin{proof}
    On the contrary assume that $\SL(n,\F)$ is acceptable. We will show that in this case, it will imply $\GL(n,\F)$ to be acceptable. Note that the map $\psi_k$ is a bijection. Let $\varphi_1,\varphi_2:G\longrightarrow\GL(n,\F)$ be two element conjugate homomorphisms. Consider the following two maps
    \begin{align*}
        &\widetilde{\varphi_1},\widetilde{\varphi_2}:G\longrightarrow\SL(n,\F)\\
        \text{defined as }&\widetilde{\varphi_i}(x)=\psi_n^{-1}(\det\varphi_i(x))\varphi_i(x).
    \end{align*}
    Note that $\widetilde{\varphi_1}$ and $\widetilde{\varphi_2}$ are element conjugate homomorphisms. Hence by assumption, they are globally conjugate. Since $\psi_n$ is a bijection and
    conjugate matrices have the same determinant, we get that $\varphi_1$ and $\varphi_2$ are globally conjugate. This contradicts \cref{thm:gl-not-acc} and finishes the proof.
\end{proof}
\begin{corollary}
    Let $(q-1,n)=1$. Then $\SL(n,\F)$ is not acceptable.
\end{corollary}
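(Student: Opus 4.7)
The plan is to deduce the corollary as an immediate application of \cref{prop:sl-not-accep-power-map}, which asserts that $\SL(n,\F)$ is not acceptable whenever the power map $\psi_n : \F \longrightarrow \F$, $x \mapsto x^n$, is surjective. The entire task therefore reduces to verifying this surjectivity under the hypothesis $(q-1,n) = 1$.

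Since $\F = \overline{\mathbb{F}_q}$ is algebraically closed, for every $y \in \F$ the polynomial $x^n - y$ has a root in $\F$, so $\psi_n$ is automatically surjective on $\F$ regardless of $n$. What the coprimality hypothesis $(q-1, n) = 1$ supplies in addition is the strictly stronger fact that $\psi_n|_{\mathbb{F}_q^*} : \mathbb{F}_q^* \longrightarrow \mathbb{F}_q^*$ is a bijection, because $\mathbb{F}_q^*$ is cyclic of order $q-1$ and $\gcd(n,q-1)=1$ forces the $n$-th power map on a cyclic group of that order to be an isomorphism. Concretely, choosing $m$ with $mn \equiv 1 \pmod{q-1}$, the map $y \mapsto y^m$ is a multiplicative section of $\psi_n$ on $\mathbb{F}_q^*$. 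This is the precise coherence that makes the expression $\psi_n^{-1}(\det \varphi_i(x))$ in the proof of \cref{prop:sl-not-accep-power-map} a well-defined homomorphism on the relevant finite subgroup of $\F^*$, so that the lifts $\widetilde{\varphi_i} : G \longrightarrow \SL(n,\F)$ are honest homomorphisms.

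With the hypothesis of \cref{prop:sl-not-accep-power-map} verified, the corollary follows at once. I do not expect any substantive obstacle in the argument itself; the real content lies entirely in \cref{prop:sl-not-accep-power-map} and, one step further back, in \cref{thm:gl-not-acc}, and this corollary simply records the cleanest concrete arithmetic sufficient condition under which the proposition applies.
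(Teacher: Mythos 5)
Your reduction is exactly the paper's intended (and unwritten) proof: the corollary is stated without proof precisely because it is meant to follow by verifying the hypothesis of \cref{prop:sl-not-accep-power-map}, which is what you do. One caveat about your supporting discussion, though. You correctly observe that surjectivity of $\psi_n$ on the algebraically closed field $\F$ is automatic, but your claim that bijectivity of $\psi_n$ on $\mathbb{F}_q^*$ is ``the precise coherence'' needed in the proof of \cref{prop:sl-not-accep-power-map} is not quite right: for an arbitrary finite group $G$ the determinants $\det\varphi_i(x)$ are roots of unity of order dividing the exponent of $G$ and need not lie in $\mathbb{F}_q^*$, and a homomorphic section of $\psi_n$ on the cyclic group $\det\varphi_i(G)\subseteq\F^*$ exists only when $n$ is coprime to the order of that group (away from $p$); the condition $(q-1,n)=1$ does not guarantee this for all $G$. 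This defect is inherited from the proposition itself, whose stated hypothesis (surjectivity) is vacuous over $\F$ while its proof asserts that $\psi_n$ is a bijection --- false on $\F^*$ whenever $n$ has a prime factor other than $p$. The corollary survives because the witnesses ultimately used against $\GL(n,\F)$ in \cref{thm:gl-not-acc} have determinants in $\{\pm1\}\subseteq\mathbb{F}_q^*$, where your section does apply (note that $(q-1,n)=1$ forces $n$ odd when $p$ is odd), but as written your justification proves slightly less than the proposition needs in full generality.
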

\begin{proposition}\label{prop:sl4q-not-acc}
    For $n\geq 4$, the group $\SL(n,\F)$ is not acceptable.
\end{proposition}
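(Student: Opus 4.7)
The plan is to apply \cref{lem:centralizer} (the centralizer lemma) to reduce the problem to the non-acceptability of a general linear group of smaller rank, already proved in \cref{thm:gl-not-acc}. Concretely, we aim to exhibit an element $a \in \SL(n,\F)$ whose centralizer in $\SL(n,\F)$ is isomorphic to $\GL(n-1,\F)$; since $n-1 \geq 3 \geq 2$ when $n \geq 4$, the group $\GL(n-1,\F)$ is not acceptable, and the centralizer lemma then forces $\SL(n,\F)$ to fail acceptability as well.

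Since $\F$ is infinite but the polynomial $x^n - 1$ has at most $n$ roots, we can pick some $\zeta \in \F^{\times}$ with $\zeta^n \neq 1$. We then take
\[ a = \diag\bigl(\zeta,\zeta,\ldots,\zeta,\zeta^{-(n-1)}\bigr) \in \GL(n,\F), \]
with $\zeta$ repeated $n-1$ times. Since $\det a = \zeta^{n-1}\cdot\zeta^{-(n-1)} = 1$ we indeed have $a \in \SL(n,\F)$, and the condition $\zeta^n \neq 1$ guarantees that $\zeta \neq \zeta^{-(n-1)}$, so $a$ has exactly two distinct eigenvalues. Therefore its centralizer in $\GL(n,\F)$ is the block-diagonal subgroup $\GL(n-1,\F) \times \GL(1,\F)$, and intersecting with $\SL(n,\F)$ selects exactly the pairs $(A,\beta)$ with $\det A \cdot \beta = 1$. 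The assignment
\[ \GL(n-1,\F) \longrightarrow \ce_{\SL(n,\F)}(a), \qquad A \longmapsto \begin{pmatrix} A & 0 \\ 0 & (\det A)^{-1} \end{pmatrix} \]
is a visibly well-defined group homomorphism that is bijective onto the centralizer, so $\ce_{\SL(n,\F)}(a) \cong \GL(n-1,\F)$.

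If $\SL(n,\F)$ were acceptable, \cref{lem:centralizer} would force $\ce_{\SL(n,\F)}(a) \cong \GL(n-1,\F)$ to be acceptable, contradicting \cref{thm:gl-not-acc} since $n - 1 \geq 3$. The only substantive calculation is the centralizer identification inside $\SL$, which is routine once one solves $\det A \cdot \beta = 1$ for the scalar block, so no genuine obstacle is expected. Incidentally, exactly the same construction also handles $n = 3$, providing an alternative route to the odd-rank statement of \cref{cor:sl-odd} and unifying both parities into one argument.
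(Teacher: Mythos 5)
Your proof is correct and follows essentially the same strategy as the paper's: take the centralizer of a suitably chosen split semisimple element of $\SL(n,\F)$ and invoke \cref{lem:centralizer} together with \cref{thm:gl-not-acc}. The paper's element is $a=\diag(\lambda,\lambda^{-1},1,\dots,1)$ with $\lambda^2\neq 1$, whose centralizer is $\F^*\times\GL(n-2,\F)$ (so it additionally needs \cref{lem:product} to discard the $\F^*$ factor), whereas your choice yields $\GL(n-1,\F)$ on the nose and, as you observe, also covers $n=3$; both variants are sound.
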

\begin{proof}
    Fix $\lambda\in\F$ such that $\lambda^2\neq 1$. Fix the element $a=\begin{pmatrix}
        \lambda&&\\
        &\dfrac{1}{\lambda}&\\
        &&I
    \end{pmatrix}\in\SL(n,\F)$.
    Then we have that
    \begin{align*}
        \ce_{\SL(n,\F)}(a)=&\left\{\begin{pmatrix}
            \alpha&&\\
            &\beta&&\\
            &&A
        \end{pmatrix}\middle\vert \substack{\alpha,\beta\in\F^*\\A\in\GL(n-2,\F)\\\alpha\beta\det A=1}\right\}\\
        \cong&\F^*\times\GL(n-2,\F).
    \end{align*}
    Since $\GL(n-2,\F)$ is not acceptable, using \cref{lem:product}, we get that $\SL(n,\F)$ is not acceptable.
\end{proof}

\section{Unitary groups}\label{sec:unitary groups}
\begin{lemma}\label{lem:unitary-induction}
For a field $\F$, if $\U(n+1,\F)$ is acceptable, then so is $\U(n,\F)$.
\end{lemma}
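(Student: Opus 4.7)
The plan is to mimic the proof of \cref{lem:gl-smaller-case} by producing an element $a\in\U(n+1,\F)$ whose centralizer is a direct product involving $\U(n,\F)$, and then combining \cref{lem:centralizer} with \cref{lem:product}. Concretely, if one can find $a$ with $\ce_{\U(n+1,\F)}(a)\cong \U(n,\F)\times \U(1,\F)$, then acceptability of $\U(n+1,\F)$ forces acceptability of its centralizer at $a$, and hence of $\U(n,\F)$ by splitting off the direct factor.

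The element $a$ arises from an orthogonal decomposition of the ambient space. Since the defining Hermitian form on $\F^{n+1}$ is non-degenerate, one can select a non-isotropic vector $v$ and obtain the non-degenerate orthogonal decomposition $\F^{n+1}=\F v\oplus v^{\perp}$. Picking a non-trivial norm-one scalar $\mu\in \U(1,\F)$, i.e.\ $\mu\neq 1$ with $\mu\,\overline{\mu}=1$ (for $\F=\overline{\mathbb{F}_q}$ any non-trivial $(q+1)$-th root of unity will do), one defines $a$ to act as multiplication by $\mu$ on $\F v$ and as the identity on $v^{\perp}$. Then $a\in\U(n+1,\F)$, and because $a$ has the two distinct eigenvalues $\mu$ and $1$ whose eigenspaces are orthogonal non-degenerate summands, any $B$ commuting with $a$ must preserve both summands, yielding
\begin{align*}
\ce_{\U(n+1,\F)}(a)\;\cong\; \U(v^{\perp})\times \U(\F v)\;\cong\;\U(n,\F)\times \U(1,\F),
\end{align*}
where the last isomorphism uses that non-degenerate Hermitian forms of equal dimension over $\F$ are equivalent.

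The main technical nuisance is handling the specific anti-diagonal form $J$ fixed in the paper. One may either change basis to diagonalize $J$ (legitimate, since acceptability depends only on the abstract isomorphism type), or work with $J$ directly: if $n+1$ is odd take $v=e_{(n+2)/2}$, while if $n+1=2m$ is even take $v=e_m+e_{m+1}$; in either case $v$ is non-isotropic for $J$, and the argument above produces the required centralizer. The only substantive hypothesis is the existence of a non-trivial $\mu$ of norm one, which holds in all the settings considered in the paper. Once the centralizer is identified, \cref{lem:centralizer} gives its acceptability and \cref{lem:product} immediately isolates $\U(n,\F)$ as an acceptable direct factor, completing the induction step.
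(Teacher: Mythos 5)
Your proposal is correct and follows essentially the same route as the paper: pick a suitable semisimple element $a\in\U(n+1,\F)$ whose two orthogonal eigenspaces split the centralizer as $\U(n,\F)\times\U(1,\F)$, then apply \cref{lem:centralizer} followed by \cref{lem:product}. The only difference is that the paper diagonalizes the Hermitian form and takes $\mu=-1$, whereas your choice of an arbitrary non-trivial norm-one scalar $\mu$ is in fact slightly more robust, since it also covers characteristic $2$ where $-1=1$ and the paper's element degenerates to the identity.
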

\begin{proof}
We work with the hermitian form, which is given by the matrix
\begin{align*}
    \begin{pmatrix}
    1&&&\\
    &1&&\\
    &&\ddots&\\
    &&&1
    \end{pmatrix}.
\end{align*}
Consider the element $a=\begin{pmatrix}
    I_n&\\
    &-1
\end{pmatrix}$. Then by \cref{lem:centralizer}, we get that the $\ce_{\U(n+1,\F)}(a)$ is acceptable, if $\U(n+1,\F)$ is. Now we have that
\begin{align*}
    \ce_{\U(n+1,\F)}(a)&=\left\{\begin{pmatrix}
        A&\\
        &\alpha
    \end{pmatrix}\middle\vert\substack{A\in\U(n,\F)\\\alpha\sigma(\alpha)=1}\right\}\\
    &=\U(n,\F)\times\{\alpha\in\F^*|\alpha\sigma(\alpha)=1\}.
\end{align*}
Hence the result follows from \cref{lem:product}.
\end{proof}
\begin{proposition}\label{prop:unit-4}
The group $\U(4,\F)$ is not acceptable.
\end{proposition}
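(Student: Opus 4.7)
The plan is to apply \cref{lem:centralizer} and reduce to the non-acceptability of $\GL(2,\F)$ already established in \cref{prop:gl2-not-acc}. Concretely, I will exhibit a semisimple element $a \in \U(4,\F)$ whose centralizer in $\U(4,\F)$ is isomorphic to $\GL(2,\F)$.

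Working with the hermitian form whose matrix $J$ has $1$'s on the anti-diagonal, write $J$ in $2\times 2$ block form as $\begin{pmatrix} 0 & K \\ K & 0 \end{pmatrix}$, where $K$ is the $2\times 2$ anti-diagonal identity. I would pick $\lambda \in \F^*$ with $\lambda\sigma(\lambda) \neq 1$; such $\lambda$ exist because the norm map $x \mapsto x\sigma(x)$ is not identically $1$ on $\F^*$, the same observation implicit in \cref{lem:unitary-induction}. Set $a = \diag(\lambda,\lambda,\sigma(\lambda)^{-1},\sigma(\lambda)^{-1})$. A direct calculation gives ${}^t\overline{a}Ja = J$, so $a \in \U(4,\F)$, and the two eigenvalues $\lambda$ and $\sigma(\lambda)^{-1}$ are distinct by the choice of $\lambda$.

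Any $g \in \U(4,\F)$ commuting with $a$ must preserve its two eigenspaces, so $g = \begin{pmatrix} P & 0 \\ 0 & S \end{pmatrix}$ with $P,S \in \GL(2,\F)$. Writing ${}^t\overline{g}Jg = J$ in block form reduces to ${}^t\overline{P}KS = K$, which uniquely determines $S = K({}^t\overline{P})^{-1}K$ from $P$. Hence $g \mapsto P$ is a group isomorphism from $\ce_{\U(4,\F)}(a)$ onto $\GL(2,\F)$. If $\U(4,\F)$ were acceptable, then \cref{lem:centralizer} would make $\GL(2,\F)$ acceptable as well, contradicting \cref{prop:gl2-not-acc}. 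The main step is the block-matrix verification that the centralizer is isomorphic to $\GL(2,\F)$; this is routine linear algebra tied to the specific matrix $J$ of the hermitian form, and no genuinely novel ingredient is needed beyond \cref{lem:centralizer} and \cref{prop:gl2-not-acc}.
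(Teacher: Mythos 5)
Your proof is correct, but it takes a genuinely different route from the paper. The paper argues directly: it exhibits two commuting, mutually conjugate unipotent elements $X_1,X_a\in\U(4,\F)$ (each a sum of two Jordan blocks of size $2$) and swaps them to produce element-conjugate homomorphisms $\Z/p\Z\oplus\Z/p\Z\to\U(4,\F)$ that admit no global conjugator. You instead reduce to $\GL(2,\F)$ via \cref{lem:centralizer}: with $J=\left(\begin{smallmatrix}0&K\\K&0\end{smallmatrix}\right)$ and $a=\diag(\lambda,\lambda,\sigma(\lambda)^{-1},\sigma(\lambda)^{-1})$ for $\lambda\sigma(\lambda)\neq 1$, the block computation ${}^t\overline{P}KS=K$ (whose conjugate transpose gives the other block condition for free) does show $\ce_{\U(4,\F)}(a)\cong\GL(2,\F)$, and \cref{prop:gl2-not-acc} finishes the argument. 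This is exactly the mechanism the paper itself uses for $\SL$, $\SP$ and $\Or$, so it fits the architecture of \cref{sec:gen} well; it buys you a cleaner proof, since you avoid verifying that the paper's $X_1^iX_a^j$ and $X_a^iX_1^j$ are conjugate for all $i,j$ and that no global conjugator exists. What the paper's explicit construction buys instead is independence from the $\GL(2,\F)$ case and a concrete witness pair inside $\U(4,\F)$ itself. Two minor points to tidy up: \cref{prop:gl2-not-acc} carries the (vacuous over $\F$, but stated) hypothesis $q\geq 5$, which your reduction formally inherits; and you should record explicitly that $\lambda$ with $\lambda\sigma(\lambda)\neq 1$ exists because $\sigma(\lambda)=\lambda^{-1}$ for all $\lambda\in\F^*$ is incompatible with $\sigma$ being additive.
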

\begin{proof}
    We will work with the hermitian form given by the matrix $
        \begin{pmatrix}
            &&&1\\
            &&1&\\
            &\reflectbox{$\ddots$}&&\\
            1&&&
        \end{pmatrix}$.
    Firstly fix $1\neq a\in\F^*$. Then consider the following two matrices
    \begin{align*}
        X_1=\begin{pmatrix}
            1&1&&\\
            &1&&\\
            &&1&\\
            &&-1&1
        \end{pmatrix}\text{ and }X_a=\begin{pmatrix}
            1&a&&\\
            &1&&\\
            &&1&\\
            &&-a&1
        \end{pmatrix}.
    \end{align*}
    Then we know that these two matrices commute and are conjugate to each other. Consider the following two homomorphisms
    \begin{align*}
        &\varphi_1,\varphi_2:\Z/p\Z\oplus\Z/p\Z\longrightarrow\U(4,\F)\\
        \text{defined as }&\varphi_1((1,0))=X_1,\varphi_1((0,1))=X_a,\\
                          &\varphi_2((1,0))=X_a,\varphi_2((0,1))=X_1.
    \end{align*}
    Then we know that these two homomorphisms are element conjugate but not globally conjugate. Hence $\U(4,\F)$ is not acceptable. 
\end{proof}
\begin{theorem}\label{thm:unitary-not-acc}
For and $n\geq 4$, the groups $\U(n,\F)$ is not acceptable.
\end{theorem}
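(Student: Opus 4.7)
The plan is to prove the theorem by a straightforward induction on $n \geq 4$, using \cref{prop:unit-4} as the base case and the contrapositive of \cref{lem:unitary-induction} as the inductive step. No genuinely new ideas are required: both the existence of a distinguishing pair of homomorphisms into $\U(4, \F)$ and the centralizer reduction have already been carried out in the excerpt.

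First I would record the contrapositive of \cref{lem:unitary-induction}: if $\U(n, \F)$ is not acceptable, then neither is $\U(n+1, \F)$. This is exactly the form needed to propagate non-acceptability upward in $n$, mirroring the treatment for general linear groups in \cref{lem:gl-smaller-case} and \cref{thm:gl-not-acc}.

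Next I would set $n = 4$ as the base case, which is furnished by \cref{prop:unit-4} via the explicit element-conjugate but not globally conjugate homomorphisms $\varphi_1, \varphi_2 : \Z/p\Z \oplus \Z/p\Z \longrightarrow \U(4, \F)$. Applying the contrapositive of \cref{lem:unitary-induction} inductively, if $\U(n, \F)$ is non-acceptable for some $n \geq 4$, then $\U(n+1, \F)$ is non-acceptable (since its centralizer of the element $\diag(I_n, -1)$ is isomorphic to $\U(n, \F) \times \{\alpha \in \F^* : \alpha \sigma(\alpha) = 1\}$, and by \cref{lem:centralizer} together with \cref{lem:product} acceptability of $\U(n+1, \F)$ would force acceptability of $\U(n, \F)$).

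I do not anticipate any real obstacle in writing this out, since the substantive content is already packaged in \cref{prop:unit-4} and \cref{lem:unitary-induction}. The proof of the theorem reduces to a one-line induction, and its formulation parallels exactly the passage from \cref{prop:gl2-not-acc} to \cref{thm:gl-not-acc} for the general linear case.
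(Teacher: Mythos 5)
Your proposal is correct and follows exactly the paper's argument: the base case is \cref{prop:unit-4} and the inductive step is the contrapositive of \cref{lem:unitary-induction}, which is precisely how the paper deduces the theorem. No differences worth noting.
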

\begin{proof}
    Follows directly from \cref{lem:unitary-induction} and \cref{prop:unit-4}. 
\end{proof}

\section{Symplectic and Orthogonal groups}\label{sec:symplectic-groups}
\begin{proposition}\label{prop:smaller-symp}
For a field $\F$ if $\SP(2m+2,\F)$  is an acceptable group, then so is $\SP(2m,\F)$.
\end{proposition}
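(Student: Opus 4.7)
The plan is to adapt the centralizer arguments of \cref{lem:gl-smaller-case} and \cref{lem:unitary-induction} to the symplectic setting: exhibit a semisimple element $a\in\SP(2m+2,\F)$ whose $\SP$-centralizer splits as $\F^*\times\SP(2m,\F)$, and then combine \cref{lem:centralizer} and \cref{lem:product}.

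Concretely, I would fix $\lambda\in\F^*$ with $\lambda^2\neq 1$, available since $\F$ is algebraically closed, and, realizing the symplectic form by $J=\begin{pmatrix}0&\Lambda_{m+1}\\-\Lambda_{m+1}&0\end{pmatrix}$, set
\[
a=\diag(\lambda,1,\ldots,1,\lambda^{-1})\in\GL(2m+2,\F).
\]
A diagonal matrix $\diag(a_1,\ldots,a_{2m+2})$ is symplectic precisely when $a_ia_{2m+3-i}=1$ for every $i$, which is visibly the case for $a$, so $a\in\SP(2m+2,\F)$.

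The next step is the centralizer computation. Since the three eigenvalues $\lambda,1,\lambda^{-1}$ of $a$ are pairwise distinct, every element of $\GL(2m+2,\F)$ commuting with $a$ is block-diagonal of the form $\diag(\alpha,B,\beta)$ with $\alpha,\beta\in\F^*$ and $B\in\GL(2m,\F)$. Under $J$ the only nonzero pairing involving $e_1$ is $\langle e_1,e_{2m+2}\rangle=1$, so $\textup{span}(e_1,e_{2m+2})$ is a non-degenerate symplectic plane, orthogonal to the $1$-eigenspace $\textup{span}(e_2,\ldots,e_{2m+1})$, on which $J$ restricts to a non-degenerate alternating form of rank $2m$ (pairing $e_i$ with $e_{2m+3-i}$). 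Hence the symplectic condition on $\diag(\alpha,B,\beta)$ reduces to $\alpha\beta=1$ together with $B\in\SP(2m,\F)$, which gives
\[
\ce_{\SP(2m+2,\F)}(a)\cong\F^*\times\SP(2m,\F).
\]
Assuming $\SP(2m+2,\F)$ is acceptable, \cref{lem:centralizer} makes this centralizer acceptable, and \cref{lem:product} then forces $\SP(2m,\F)$ to be acceptable.

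The only step that calls for any real care is the restriction-of-form computation identifying $\ce_{\SP(2m+2,\F)}(a)$ with $\F^*\times\SP(2m,\F)$; once the orthogonality of the two eigenspace pieces under $J$ and the non-degeneracy of each restriction are pinned down, the rest is a formal application of the lemmas from \cref{sec:gen}.
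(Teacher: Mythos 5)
Your proposal is correct and follows the same strategy as the paper: produce a semisimple $a\in\SP(2m+2,\F)$ with $\ce_{\SP(2m+2,\F)}(a)\cong\F^*\times\SP(2m,\F)$ and then invoke \cref{lem:centralizer} and \cref{lem:product}. The one genuine difference is the choice of $a$. The paper takes $a=\diag(\lambda,\lambda^{-1})$ followed by a string of $2\times2$ blocks $\left(\begin{smallmatrix}0&1\\-1&0\end{smallmatrix}\right)$ on the remaining $2m$ coordinates, and then asserts that the $\GL$-centralizer meets $\SP(2m+2,\F)$ in $\F^*\times\SP(2m,\F)$; but those rotation blocks are semisimple with eigenvalues $\pm\sqrt{-1}$, so their centralizer in $\GL(2m,\F)$ is a proper subgroup (of the shape $\GL(m,\F)\times\GL(m,\F)$), and the displayed identification of the centralizer with $\F^*\times\SP(2m,\F)$ does not hold as written. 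Your choice $a=\diag(\lambda,1,\ldots,1,\lambda^{-1})$, acting as the identity on the middle $2m$ coordinates, avoids this entirely: the three eigenvalues $\lambda,1,\lambda^{-1}$ are distinct, the $\GL$-centralizer is exactly $\{\diag(\alpha,B,\beta)\}$, and your verification that $\textup{span}(e_1,e_{2m+2})$ and $\textup{span}(e_2,\ldots,e_{2m+1})$ are orthogonal non-degenerate pieces for $J$ correctly yields $\alpha\beta=1$ and $B\in\SP(2m,\F)$. So your argument is not only valid but repairs the centralizer computation; the conclusion and the use of the two lemmas from \cref{sec:gen} are exactly as in the paper.
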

\begin{proof}
Fix an element $\lambda\in\F^*$ such that $\lambda^2\neq 1$. Then fix the element
\begin{align*}
a=\begin{pmatrix}
\lambda&&&&&\\
&\dfrac{1}{\lambda}&&&&\\
&&&1&&\\
&&-1&&&\\
&&&\ddots&&\\
&&&&&1\\
&&&&-1&
\end{pmatrix} \in\SP(2m+2,\F).   
\end{align*}
Then we know that
\begin{align*}
    \ce_{\SP(2m+2,\F)}(a)&=\ce_{\GL(2m+2,\F)}(a)\cap\SP(2m+2,\F)\\
    &=\left\{\begin{pmatrix}
        \alpha&&\\
        &\beta&\\
        &&A\\
    \end{pmatrix}\middle\vert\substack{\alpha,\beta\in\F^*\\A\in\GL(2m,\F)}\right\}\cap\SP(2m+2,\F)\\
    &\cong\left\{\begin{pmatrix}
        \alpha&\\
        &\frac{1}{\alpha}
    \end{pmatrix}\middle\vert\alpha\in\F^*\right\}\times\SP(2m,\F).
\end{align*}
Hence the result follows from \cref{lem:product}.
\end{proof}
\begin{proposition}\label{prop:sp2-not-acc}
The group $\SP(2,\F)$ is not acceptable.
\end{proposition}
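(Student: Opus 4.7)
The strategy is to mimic the construction in \cref{prop:gl2-not-acc}, exploiting the identification $\SP(2,\F)=\SL(2,\F)$. I would build two element-conjugate but not globally conjugate homomorphisms $\varphi_1,\varphi_2\colon\Z/p\Z\oplus\Z/p\Z\longrightarrow\SP(2,\F)$ by sending the two generators to a pair of commuting unipotent upper-triangular matrices, swapped between the two homomorphisms.

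First I would choose $a,b\in\F^*$ that are linearly independent over the prime field $\mathbb{F}_p$; this is possible because $\F$ is algebraically closed and therefore strictly contains $\mathbb{F}_p$ (take, for instance, $a=1$ and any $b\in\F\setminus\mathbb{F}_p$). In particular $a^2\neq b^2$. Set
\begin{align*}
X_a=\begin{pmatrix}1&a\\0&1\end{pmatrix},\qquad X_b=\begin{pmatrix}1&b\\0&1\end{pmatrix}.
\end{align*}
Both matrices lie in $\SP(2,\F)$, commute, and have order $p$, so the assignments $\varphi_1(1,0)=\varphi_2(0,1)=X_a$ and $\varphi_1(0,1)=\varphi_2(1,0)=X_b$ determine valid homomorphisms. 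A direct computation gives $\varphi_1(m,n)=X_{ma+nb}$ and $\varphi_2(m,n)=X_{mb+na}$. The $\mathbb{F}_p$-linear independence of $\{a,b\}$ guarantees that both images are trivial iff $(m,n)=(0,0)$; otherwise both are nontrivial unipotent elements of $\SL(2,\F)$, and over the algebraically closed field $\F$ any two such matrices are conjugate via a diagonal matrix in $\SL(2,\F)$ (after taking a square root of the ratio of off-diagonal entries). This delivers element-conjugacy of $\varphi_1$ and $\varphi_2$.

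To rule out global conjugacy, I would suppose some $g=\begin{pmatrix}\alpha&\beta\\\gamma&\delta\end{pmatrix}\in\SP(2,\F)$ simultaneously conjugates $X_a$ to $X_b$ and $X_b$ to $X_a$. Expanding $gX_a=X_bg$ entry by entry forces $\gamma=0$ and $\alpha a=b\delta$, while the symmetric computation for $gX_b=X_ag$ forces $\alpha b=a\delta$. Multiplying these relations gives $\alpha^2=\delta^2$, and together with the symplectic constraint $\alpha\delta=1$ this forces $a=\pm b$, contradicting $a^2\neq b^2$. The main obstacle is the element-conjugacy step: the $\mathbb{F}_p$-linear independence of $\{a,b\}$ is what ensures the kernels of $\varphi_1$ and $\varphi_2$ coincide, whereas the condition $a^2\neq b^2$ only governs the subsequent non-global-conjugacy obstruction.
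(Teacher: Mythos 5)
Your proof is correct and follows essentially the same route as the paper: two commuting unipotent transvections in $\SP(2,\F)=\SL(2,\F)$ are swapped between two homomorphisms from $\Z/p\Z\oplus\Z/p\Z$, which are element-conjugate because all nontrivial unipotents are conjugate in $\SL(2,\F)$ over the algebraically closed field, but not globally conjugate by the upper-triangular computation. Your insistence on the $\mathbb{F}_p$-linear independence of $a$ and $b$ is a welcome extra precaution (it guarantees the two homomorphisms have equal kernels, hence are genuinely element-conjugate on every element of the source group, a point the paper's choice of parameters glosses over), but otherwise the argument is the same.
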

\begin{proof}
    Fix $a\in(\F^*)^2\setminus\{1\}$. It is easy to see that
    \begin{align*}
        \begin{pmatrix}
            1&1\\&1
        \end{pmatrix},\begin{pmatrix}
            1&a\\&1
        \end{pmatrix}\in\SP(2,\F)
    \end{align*}
    are conjugate to each other by an element of $\SP(2,\F)$. Consider the following two homomorphisms 
    \begin{align*}
            &\varphi_1,\varphi_2:\Z/p\Z\oplus\Z/p\Z\longrightarrow\SP(2,\F)\\
            \text{given by }&\varphi_1\left((1,0)\right)=\begin{pmatrix}
                1&1\\&1
            \end{pmatrix},\varphi_1\left((0,1)\right)=\begin{pmatrix}
                1&a\\&1
            \end{pmatrix},\\
            &\varphi_2\left((1,0)\right)=\begin{pmatrix}
                1&a\\&1
            \end{pmatrix},\varphi_2\left((0,1)\right)=\begin{pmatrix}
                1&1\\&1
            \end{pmatrix}.
    \end{align*}
    Then using the same argument as \cref{prop:gl2-not-acc} we get that these two homomorphisms are not globally-conjugate. Hence the result follows.
\end{proof}
\begin{corollary}\label{cor:sp2-not-acc}
The group $\SL(2,\F)$ is not acceptable.
\end{corollary}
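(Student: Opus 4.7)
The plan is to invoke the classical identification $\SL(2,\F)=\SP(2,\F)$ and then quote \cref{prop:sp2-not-acc} directly. So the proof is essentially a one-line reduction.

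Concretely, I would first recall that the symplectic group $\SP(2,\F)$ is defined as the stabilizer of a non-degenerate alternating form on $\F^2$, whose matrix can be taken to be $J=\begin{pmatrix}0&1\\-1&0\end{pmatrix}$. For a $2\times 2$ matrix $A=\begin{pmatrix}a&b\\c&d\end{pmatrix}$, a direct computation of $\tr AJA$ yields $(ad-bc)J$, so the condition $\tr AJA=J$ is equivalent to $\det(A)=1$. Hence $\SP(2,\F)=\SL(2,\F)$ as subgroups of $\GL(2,\F)$.

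Next, since the two groups are literally equal, any pair of element-conjugate but not globally conjugate homomorphisms from a finite group into $\SP(2,\F)$ is automatically a pair of element-conjugate but not globally conjugate homomorphisms into $\SL(2,\F)$. Applying \cref{prop:sp2-not-acc}, which produces exactly such a pair of homomorphisms $\varphi_1,\varphi_2:\Z/p\Z\oplus\Z/p\Z\longrightarrow\SP(2,\F)$, we conclude that $\SL(2,\F)$ is not acceptable.

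There is essentially no obstacle here; the only thing to be careful about is verifying the identification $\SL(2,\F)=\SP(2,\F)$ with the conventions for $J$ used in \cref{sec:conju}, which is an elementary $2\times 2$ matrix calculation. In particular, one does not need to reprove that the unipotent matrices $\begin{pmatrix}1&1\\0&1\end{pmatrix}$ and $\begin{pmatrix}1&a\\0&1\end{pmatrix}$ with $a\in(\F^*)^2\setminus\{1\}$ are conjugate in $\SL(2,\F)$, since conjugacy in $\SP(2,\F)$ is literally conjugacy in $\SL(2,\F)$ under this identification.
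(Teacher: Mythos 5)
Your proposal is correct and matches the paper's argument exactly: the paper also deduces the corollary from \cref{prop:sp2-not-acc} via the identification $\SL(2,\F)\cong\SP(2,\F)$ (citing \cite{Gro02}). Your explicit $2\times 2$ verification that $\tr AJA=(\det A)J$ is a nice touch but adds nothing beyond the paper's one-line reduction.
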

\begin{proof}
    Since $\SL(2,\F)\cong\SP(2,\F)$ (see \cite{Gro02}), the result follows immediately.
\end{proof}
\begin{theorem}\label{th:symplectic-not-acc}
The groups $\SP(2n,\F)$ are not acceptable.
\end{theorem}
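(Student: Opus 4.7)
The plan is to combine the two preceding results by a straightforward downward induction, so no new construction is needed at all. Specifically, \cref{prop:sp2-not-acc} provides the base case that $\SP(2,\F)$ is not acceptable, and \cref{prop:smaller-symp} supplies the inductive mechanism through its contrapositive: if $\SP(2m,\F)$ is not acceptable, then neither is $\SP(2m+2,\F)$.

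Concretely, I will argue by contradiction. Suppose for some $n \geq 1$ the group $\SP(2n,\F)$ were acceptable. Applying \cref{prop:smaller-symp} once gives that $\SP(2n-2,\F)$ is acceptable, and iterating this reduction $n-1$ times yields that $\SP(2,\F)$ is acceptable, contradicting \cref{prop:sp2-not-acc}. Hence $\SP(2n,\F)$ is not acceptable for every $n \geq 1$.

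There is no real obstacle here, since all the technical content has already been absorbed into the two preceding propositions. The one small thing worth double-checking in writing is that the centralizer argument of \cref{prop:smaller-symp} does not require any assumption on $m$ (in particular that it works when $m=1$, so the induction chain reaches the base case $\SP(2,\F)$ without a gap). Given that the element $a$ built there only needs a scalar $\lambda \in \F^*$ with $\lambda^2 \neq 1$, this is automatic for any $\F$ in which $\SP(2,\F)$ itself is a nontrivial group, so the induction runs without issue.
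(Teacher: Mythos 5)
Your argument is exactly the paper's: the theorem is deduced by combining \cref{prop:smaller-symp} (in contrapositive form, as the descent step) with the base case \cref{prop:sp2-not-acc}. Your additional check that the reduction works down to $m=1$ is a sensible sanity check but introduces nothing beyond what the paper already relies on.
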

\begin{proof}
    Follows easily from \cref{prop:smaller-symp} and \cref{prop:sp2-not-acc}.
\end{proof}
\begin{proposition}\label{prop:orthogonal-induction}
    The acceptability of $\Or(2m+1,\F)$ implies the acceptability of $\Or(2m-1,\F)$. 
\end{proposition}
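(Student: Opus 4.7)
The plan is to mimic the inductive lemmas already established (\cref{lem:gl-smaller-case}, \cref{lem:unitary-induction}, \cref{prop:smaller-symp}): exhibit an element $a\in\Or(2m+1,\F)$ whose centralizer splits as a direct product with $\Or(2m-1,\F)$ as a factor, and then invoke \cref{lem:centralizer} together with \cref{lem:product}.

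Concretely, I would take
\begin{align*}
a=\diag(-1,1,1,\ldots,1,-1)\in\GL(2m+1,\F),
\end{align*}
with the two $-1$'s in the first and last diagonal positions. First I would verify that $a\in\Or(2m+1,\F)$ by computing $\tr{a}J_o a$: since $a=\tr a$ and the only off-diagonal entries of $J_o$ in the $(1,2m+1)$ and $(2m+1,1)$ positions each get multiplied by $(-1)(-1)=1$ while the middle block is unaffected, we obtain $\tr{a}J_o a=J_o$.

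Next I would compute $\ce_{\Or(2m+1,\F)}(a)$. The $(-1)$-eigenspace $V_1=\mathrm{span}(e_1,e_{2m+1})$ and the $(+1)$-eigenspace $V_2=\mathrm{span}(e_2,\ldots,e_{2m})$ give an orthogonal decomposition for $J_o$: the form restricted to $V_1$ is a hyperbolic plane (pairing $e_1$ with $e_{2m+1}$), while on $V_2$ it has precisely the shape $\begin{pmatrix}0&0&\Lambda_{m-1}\\0&\alpha&0\\\Lambda_{m-1}&0&0\end{pmatrix}$, i.e.\ the defining form of $\Or(2m-1,\F)$. A matrix commutes with $a$ iff it preserves both eigenspaces, so in the induced block decomposition the centralizer in $\GL$ is $\GL(2,\F)\times\GL(2m-1,\F)$; intersecting with $\Or(2m+1,\F)$ and using the orthogonality of the decomposition yields
\begin{align*}
\ce_{\Or(2m+1,\F)}(a)\cong \Or(2,\F)\times\Or(2m-1,\F).
\end{align*}

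Finally, by \cref{lem:centralizer} the group $\ce_{\Or(2m+1,\F)}(a)$ is acceptable, and by \cref{lem:product} each of its two factors is acceptable, in particular $\Or(2m-1,\F)$. The main (and only) obstacle is the centralizer computation, but the choice of $a$ with eigenspaces that are mutually orthogonal with respect to $J_o$ makes both the verification $a\in\Or$ and the direct-product decomposition essentially immediate.
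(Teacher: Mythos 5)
Your proof is correct and follows essentially the same route as the paper: choose an involution whose $(-1)$-eigenspace is $2$-dimensional, identify its centralizer in $\Or(2m+1,\F)$ with $\Or(2,\F)\times\Or(2m-1,\F)$, and conclude via \cref{lem:centralizer} and \cref{lem:product}. Your placement of the two $-1$'s in positions $1$ and $2m+1$ is if anything slightly more careful than the paper's $\diag(-I_2,I_{2m-1})$, since your element is visibly orthogonal for the Gram matrix $J_o$ fixed in \cref{sec:conju} and the eigenspace decomposition is manifestly orthogonal.
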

\begin{proof}
    Fix the element 
    $a=\begin{pmatrix}
        -I_2&\\
        &&I_{2m-1}
    \end{pmatrix}\in\Or(2m+1,\F)$. Then by \cref{lem:centralizer} we get that $\ce_{\Or(2m+1,\F)}(a)$ is acceptable. Now we have that
    \begin{align*}
        \ce_{\Or(2m+1,\F)}(a)&=\left\{\begin{pmatrix}
            A&\\
            &B
        \end{pmatrix}\middle\vert \substack{A\in\GL(2,q)\\B\in\GL(2m-1,q)}\right\}\cap\Or(2m+1,\F)\\
        &\cong \left\{A\in\GL(2,\F)|AA^t=I\right\}\times\Or(2m-1,\F).
    \end{align*}
    Hence the result follows from \cref{lem:product}.
\end{proof}
\begin{proposition}\label{prop:Ortho5-not-acceptable}
    The group $\Or(5,\F)$ is not acceptable.
\end{proposition}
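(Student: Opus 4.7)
My plan is to follow the template of Proposition \ref{prop:unit-4} and Proposition \ref{prop:sp2-not-acc}: construct two commuting, $\Or(5,\F)$-conjugate unipotent elements $X_1, X_a \in \Or(5,\F)$ such that the associated pair of homomorphisms from $\Z/p\Z \oplus \Z/p\Z$ is element-conjugate but not globally conjugate. For simplicity I carry this out assuming the characteristic $p$ is odd; the characteristic-$2$ case can be handled separately via the connection with $\SP(4,\F)$ and \cref{th:symplectic-not-acc}.

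The construction proceeds as follows. Fixing the form $J_o$, take $N = E_{14} - E_{25}$. A short calculation gives $J_o N + N^{t} J_o = 0$, so $N$ lies in the Lie algebra of $\Or(5,\F)$; moreover $N^2 = 0$, so $X_b := I + bN$ lies in $\Or(5,\F)$ for every $b \in \F$, and these satisfy $X_b X_c = X_{b+c}$, each of order dividing $p$. For $b \neq 0$, $X_b$ is unipotent with $\mathrm{rank}(X_b - I) = 2$ and $(X_b - I)^2 = 0$, i.e., of Jordan type $(2,2,1)$. Since this orthogonal partition contains the odd part $1$, the standard classification in odd characteristic places all such $X_b$ in a single $\Or(5,\F)$-class, so $X_1$ and $X_a$ are conjugate for every $a \in \F^*$.

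Next, I choose $a \in \F \setminus \mathbb{F}_p$ (possible since $\F = \overline{\mathbb{F}_q}$ is infinite) and define $\varphi_1, \varphi_2 : \Z/p\Z \oplus \Z/p\Z \to \Or(5,\F)$ by $\varphi_1(1,0) = \varphi_2(0,1) = X_1$ and $\varphi_1(0,1) = \varphi_2(1,0) = X_a$. These are well-defined homomorphisms (since $X_1, X_a$ commute), with $\varphi_1(g,h) = X_{g+ah}$ and $\varphi_2(g,h) = X_{ag+h}$. Because $\{1, a\}$ is $\mathbb{F}_p$-linearly independent in $\F$, each of $g+ah$ and $ag+h$ vanishes only at $(g,h) = (0,0)$. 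Hence for every nonzero $(g,h)$ both images are non-trivial unipotents of Jordan type $(2,2,1)$, and so are $\Or(5,\F)$-conjugate, establishing element-conjugacy.

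Finally, suppose for contradiction that some $A \in \Or(5,\F)$ satisfies $A X_1 A^{-1} = X_a$ and $A X_a A^{-1} = X_1$. Subtracting $I$ from both sides translates these into $A N A^{-1} = aN$ and $a \cdot A N A^{-1} = N$, whence $(a^2-1)N = 0$. Since $N \neq 0$, this forces $a^2 = 1$, contradicting $a \notin \mathbb{F}_p$ in odd characteristic. So $\varphi_1$ and $\varphi_2$ are not globally conjugate, proving $\Or(5,\F)$ is not acceptable. The main obstacle is the bookkeeping in Step 1 — verifying $N \in \mathrm{Lie}(\Or(5,\F))$ and identifying the Jordan type of $X_b$ — while the conceptual heart is the clean $a^2 = 1$ contradiction, a direct analogue of the argument used for $\SP(2,\F)$.
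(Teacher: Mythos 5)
Your proof is correct and follows essentially the same route as the paper: both construct a one-parameter family $X_b=I+bN$ of commuting, mutually conjugate unipotents of Jordan type $(2,2,1)$ in $\Or(5,\F)$ (the paper realizes it as $\diag(\alpha_b,1,{}^t\alpha_b^{-1})$, you as $I+b(E_{14}-E_{25})$) and then swap $X_1$ with $X_a$ via homomorphisms from $\Z/p\Z\oplus\Z/p\Z$, deriving the contradiction $a^2=1$ from a putative global conjugator. If anything, your requirement that $a\notin\mathbb{F}_p$ (so that $\{1,a\}$ is $\mathbb{F}_p$-linearly independent and both $g+ah$ and $ag+h$ vanish only at the origin) is more careful than the paper's condition ``$a\neq 1$'', which by itself does not guarantee element-conjugacy when $a\in\mathbb{F}_p\setminus\{0,\pm 1\}$.
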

\begin{proof}
    We will work with the non-degenerate quadratic form of the form
    \begin{align*}
        \begin{pmatrix}
            0&0&I_2\\0&1&0\\I_2&0&0
        \end{pmatrix},
    \end{align*}
    where $I_2$ is the identity matrix of size $2\times 2$. Consider the matrices 
    \begin{align*}
        \alpha_1=\begin{pmatrix}
            1&1\\
            &1
        \end{pmatrix}\text{ and }\alpha_{a}=\begin{pmatrix}
            1&a\\&1
        \end{pmatrix},
    \end{align*}
    where $a\neq 1$. Then the matrices
    \begin{align*}
        X_1=\begin{pmatrix}
            \alpha_1&&\\&1&\\&&^t\alpha_1^{-1}
        \end{pmatrix}\text{ and }X_a=\begin{pmatrix}
            \alpha_a&&\\&1&\\&&^t\alpha_a^{-1}
        \end{pmatrix}.
    \end{align*}
    Then $X_1\cdot X_a=X_a\cdot X_1$ and both are elements of order $p$ and are conjugate in $\Or(5,\F)$. Consider the following two homomorphisms:
    \begin{align*}
        &\varphi_1,\varphi_2:\Z/p\Z\oplus\Z/p\Z\longrightarrow\Or(5,\F)\\
        \text{given by }&\varphi_1((1,0))=X_1,\varphi_1((0,1))=X_a\\
        &\varphi_2((1,0))=X_a,\varphi_2((0,1))=X_1.
    \end{align*}
    Then although these two homomorphisms are element-conjugate, they are not globally conjugate, using the same argument as of \cref{prop:gl2-not-acc}.
\end{proof}
\begin{theorem}\label{thm:OddOrth-not-acceptable}
    For $m\geq 2$ none of the groups $\Or(2m+1,\F)$ are acceptable.
\end{theorem}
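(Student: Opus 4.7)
The statement follows by a straightforward descending induction on $m$, chaining together the two preceding results. The plan is to treat $m=2$ as the base case, which is exactly \cref{prop:Ortho5-not-acceptable} (the group $\Or(5,\F)$ is not acceptable), and then use \cref{prop:orthogonal-induction} contrapositively to propagate non-acceptability upward.

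More concretely, I would argue by contradiction: assume that $\Or(2m+1,\F)$ is acceptable for some $m\geq 2$. Applying \cref{prop:orthogonal-induction} once gives that $\Or(2m-1,\F)$ is acceptable. Iterating this implication $m-2$ times, we conclude that $\Or(5,\F)$ is acceptable, which contradicts \cref{prop:Ortho5-not-acceptable}. Hence $\Or(2m+1,\F)$ cannot be acceptable for any $m\geq 2$.

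There is essentially no obstacle here, since both ingredients have already been established earlier in the excerpt; the theorem is simply the combined statement of the descending induction. The only minor bookkeeping point is to check that \cref{prop:orthogonal-induction} is genuinely applicable at every stage from $m$ down to $2$, which it is, because the statement ``$\Or(2k+1,\F)$ acceptable implies $\Or(2k-1,\F)$ acceptable'' holds uniformly for all $k\geq 2$, so repeated application remains valid throughout the descent. Thus the proof is a one-line induction invoking \cref{prop:Ortho5-not-acceptable} and \cref{prop:orthogonal-induction}.
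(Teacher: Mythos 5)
Your proposal is correct and matches the paper's proof exactly: the theorem is deduced by combining \cref{prop:Ortho5-not-acceptable} as the base case with repeated (contrapositive) application of \cref{prop:orthogonal-induction}. The paper states this in one line; your version just spells out the descent explicitly.
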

\begin{proof}
    Follows from \cref{prop:Ortho5-not-acceptable} and \cref{prop:orthogonal-induction}.
\end{proof}
\begin{proposition}\label{prop:induction-orthogonal}
    The acceptability of the group $\Or(2m+2,\F)$ implies the acceptability of $\Or(2m,\F)$.
\end{proposition}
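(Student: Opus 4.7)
The plan is to follow the strategy used in \cref{prop:orthogonal-induction}, \cref{prop:smaller-symp}, and \cref{lem:gl-smaller-case}: exhibit an involution $a \in \Or(2m+2,\F)$ whose centralizer is isomorphic to a direct product with $\Or(2m,\F)$ as one of the factors, and then combine \cref{lem:centralizer} with \cref{lem:product}.

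Since the bilinear form $J_{e}$ on $\F^{2m+2}$ pairs the $i$-th coordinate with the $(2m+3-i)$-th coordinate, the subspaces $V_1 = \text{span}(e_1, e_{2m+2})$ and $V_2 = \text{span}(e_2,\ldots, e_{2m+1})$ are mutually orthogonal, and the form restricts non-degenerately to each; the restriction to $V_1$ is the $2\times 2$ hyperbolic form $\bigl(\begin{smallmatrix} 0 & 1 \\ 1 & 0 \end{smallmatrix}\bigr)$, while the restriction to $V_2$ is precisely $J_{e}$ for dimension $2m$. I would therefore take
$$a = \begin{pmatrix} -1 & 0 & 0 \\ 0 & I_{2m} & 0 \\ 0 & 0 & -1 \end{pmatrix},$$
which acts as $-I$ on $V_1$ and as $I$ on $V_2$. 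A quick check shows $\tr{a}J_{e}a = J_{e}$, so indeed $a\in\Or(2m+2,\F)$.

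Next I would compute the centralizer. Any element of $\ce_{\GL(2m+2,\F)}(a)$ preserves the $(-1)$- and $(+1)$-eigenspaces of $a$, which are $V_1$ and $V_2$ respectively; hence such elements are block-diagonal with respect to the decomposition $V_1\oplus V_2$. Intersecting with $\Or(2m+2,\F)$ then forces each block to preserve the restricted form on the corresponding subspace, giving
$$\ce_{\Or(2m+2,\F)}(a) \;\cong\; \Or(2,\F) \times \Or(2m,\F).$$
An application of \cref{lem:centralizer} followed by \cref{lem:product} would then finish the argument.

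I do not foresee a significant obstacle. The only point requiring care is checking that the diagonal element $a$ lies in $\Or(2m+2,\F)$ and that its centralizer splits as the claimed direct product, but both follow cleanly from the fact that $V_1$ and $V_2$ are non-degenerate mutually orthogonal subspaces with respect to $J_{e}$. The argument is completely parallel to the symplectic induction in \cref{prop:smaller-symp} and the odd-orthogonal induction in \cref{prop:orthogonal-induction}.
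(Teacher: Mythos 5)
Your proof is correct and takes essentially the same approach as the paper: the paper also picks an involution acting as $-I$ on a non-degenerate $2$-dimensional subspace, identifies the centralizer inside $\Or(2m+2,\F)$ with $\Or(2m,\F)\times\Or(2,\F)$, and concludes via \cref{lem:centralizer} and \cref{lem:product}. The only cosmetic difference is the choice of Gram matrix (the paper uses a block-diagonal hyperbolic form and negates the last two coordinates, while you keep $J_e$ and negate the paired coordinates $e_1,e_{2m+2}$), which is just a change of basis.
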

\begin{proof}
    We will work with the quadratic form 
    \begin{align*}
        \begin{pmatrix}
            0&1&&&\\
            1&0&&&\\
            &&&\ddots&&\\
            &&&&0&1\\
            &&&&1&0
        \end{pmatrix}.
    \end{align*}
    Consider the element $a=\begin{pmatrix}
        I_{2m}&\\&-I_2
    \end{pmatrix}\in\Or(2m+2,\F)$. Then by \cref{lem:centralizer}, we have that $\ce_{\Or(2m+2,\F)}(a)$ is acceptable. Note that
    \begin{align*}
        \ce_{\Or^+(2m+2,\F)(a)}&=\left\{\begin{pmatrix}
            A&\\
            &B
        \end{pmatrix}\middle\vert\substack{A\in\GL(2m,\F)\\B\in\GL(2,\F)}\right\}\cap\Or(2m+2,\F)\\
        &=\Or(2m,q)\times\Or(2,\F).
    \end{align*}
    Hence the result follows from \cref{lem:product}.
\end{proof}
\begin{proposition}\label{prop:orth+4-not-accp}
    For $q\geq 5$, the group $\Or(4,\F)$ is not acceptable.
\end{proposition}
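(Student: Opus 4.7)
The plan is to follow the template of \cref{prop:Ortho5-not-acceptable}, adapted to four dimensions, by producing two element-conjugate but not globally conjugate homomorphisms $\varphi_1,\varphi_2:\Z/p\Z\oplus\Z/p\Z\longrightarrow\Or(4,\F)$. First I fix the quadratic form with matrix $J_e=\begin{pmatrix} 0 & \Lambda_2\\ \Lambda_2 & 0\end{pmatrix}$, and observe that the block-diagonal map $\iota:\GL(2,\F)\longrightarrow\Or(4,\F)$ defined by $\iota(\alpha)=\begin{pmatrix}\alpha & 0\\ 0 & \Lambda_2\,{}^t\alpha^{-1}\Lambda_2\end{pmatrix}$ is an injective homomorphism. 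I then choose $a\in\F$ with $a^2\neq 1$ and $a\notin\F_p$ (available because $\F=\overline{\F_q}$ is infinite; in particular for $q\geq 5$), and set $X_1=\iota\!\begin{pmatrix}1 & 1\\ 0 & 1\end{pmatrix}$ and $X_a=\iota\!\begin{pmatrix}1 & a\\ 0 & 1\end{pmatrix}$. These commute, have order $p$, and are conjugate in $\Or(4,\F)$ via $\iota(\diag(a,1))=\diag(a,1,1,a^{-1})$. Finally, I define $\varphi_1$ by sending $(1,0),(0,1)$ to $X_1,X_a$ and let $\varphi_2$ swap these two images.

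For the element-conjugacy check, a direct computation yields $\varphi_1(m,n)=\iota\!\begin{pmatrix}1 & m+na\\ 0 & 1\end{pmatrix}$ and $\varphi_2(m,n)=\iota\!\begin{pmatrix}1 & n+ma\\ 0 & 1\end{pmatrix}$. Since $a\notin\F_p$, the $\F_p$-linear relations $m+na=0$ and $n+ma=0$ each force $(m,n)=(0,0)$; in all other cases both images are non-identity unipotent elements of $\iota(\GL(2,\F))$, and any two such are conjugate in $\Or(4,\F)$ via an element of the form $\iota(\diag(t,1))$. Hence $\varphi_1$ and $\varphi_2$ are element-conjugate.

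The main step, and the main obstacle, is showing that no single $g\in\Or(4,\F)$ satisfies both $gX_1g^{-1}=X_a$ and $gX_ag^{-1}=X_1$. Writing $g=\begin{pmatrix}P & Q\\ R & S\end{pmatrix}$ in $2\times 2$ blocks and setting $E=\begin{pmatrix}0 & 1\\ 0 & 0\end{pmatrix}$, so that $\alpha_1=I+E$, $\alpha_a=I+aE$, $\Lambda_2\,{}^t\alpha_1^{-1}\Lambda_2=I-E$, and $\Lambda_2\,{}^t\alpha_a^{-1}\Lambda_2=I-aE$, the two conjugation identities reduce, for each block $B\in\{P,Q,R,S\}$, to a pair of relations of the shape $BE=\pm aEB$ and $EB=\pm aBE$. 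Combining each pair yields $(a^2-1)EB=0$, and the hypothesis $a^2\neq 1$ forces $EB=BE=0$, so every block of $g$ is supported solely at its $(1,2)$ entry. Consequently rows $2$ and $4$ of $g$ vanish, contradicting the invertibility of $g$. Although the commutation bookkeeping across the four blocks is somewhat delicate, once it is set up correctly the conclusion follows by the same algebraic mechanism as in the proof of \cref{prop:gl2-not-acc}.
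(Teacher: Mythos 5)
Your proof is correct and follows essentially the same route as the paper's: embed upper-triangular unipotents of $\GL(2,\F)$ block-diagonally into $\Or(4,\F)$, swap the two generators of $\Z/p\Z\oplus\Z/p\Z$, and kill any putative global conjugator by the block computation of \cref{prop:gl2-not-acc}. Your extra hypothesis $a\notin\mathbb{F}_p$ (together with $a^2\neq 1$) is in fact a needed sharpening: with only $a\neq 1$ as in the paper, if $a$ happens to lie in $\mathbb{F}_p$ then $\varphi_1$ and $\varphi_2$ have different kernels (e.g.\ $\varphi_1(-a,1)=I$ while $\varphi_2(-a,1)\neq I$ when $a^2\neq1$), so they would not even be element-conjugate.
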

\begin{proof}
    In this proof we will work with the quadratic form $\begin{pmatrix}
        0&I_2\\I_2&0
    \end{pmatrix}$. The proof is similar to that of \cref{prop:Ortho5-not-acceptable}, but we will replicate it with proper modification for completeness. Consider the matrices 
    \begin{align*}
        \alpha_1=\begin{pmatrix}
            1&1\\
            &1
        \end{pmatrix}\text{ and }\alpha_{a}=\begin{pmatrix}
            1&a\\&1
        \end{pmatrix},
    \end{align*}
    where $a\neq 1$. Then the matrices
    \begin{align*}
        X_1=\begin{pmatrix}
            \alpha_1&\\&^t\alpha_1^{-1}
        \end{pmatrix}\text{ and }X_a=\begin{pmatrix}
            \alpha_a&\\&^t\alpha_a^{-1}
        \end{pmatrix}.
    \end{align*}
    Then $X_1\cdot X_a=X_a\cdot X_1$ and both are element of order $p$ and are conjugate in $\Or(4,\F)$. Now consider the following two homomorphisms:
    \begin{align*}
        &\varphi_1,\varphi_2:\Z/p\Z\oplus\Z/p\Z\longrightarrow\Or(4,\F)\\
        \text{given by }&\varphi_1((1,0))=X_1,\varphi_1((0,1))=X_a\\
        &\varphi_2((1,0))=X_a,\varphi_2((0,1))=X_1.
    \end{align*}
    Then although these two homomorphisms are element-conjugate, they are not globally conjugate, using the same argument as of \cref{prop:gl2-not-acc}.
\end{proof}
\begin{theorem}\label{thm:Orth-odd-not-acc}
    For $m\geq 2$, the groups $\Or(2m,q)$ are not acceptable.
\end{theorem}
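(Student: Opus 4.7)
The plan is a straightforward induction on $m$, combining the two results immediately preceding the statement. For the base case $m = 2$, \cref{prop:orth+4-not-accp} already exhibits explicit unipotent elements $X_1, X_a \in \Or(4,\F)$ and a pair of homomorphisms $\varphi_1, \varphi_2 : \Z/p\Z \oplus \Z/p\Z \to \Or(4,\F)$ that are element-conjugate but not globally conjugate, so $\Or(4,\F)$ is not acceptable.

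For the inductive step, I would argue by contrapositive. \cref{prop:induction-orthogonal} shows that if $\Or(2m+2,\F)$ is acceptable, then so is $\Or(2m,\F)$: the centralizer of a suitably chosen involution splits as $\Or(2m,\F) \times \Or(2,\F)$, and \cref{lem:centralizer} together with \cref{lem:product} transfers acceptability downward. Contrapositively, if $\Or(2m,\F)$ fails to be acceptable, then $\Or(2m+2,\F)$ fails to be acceptable as well. Chaining this contrapositive upward from $m = 2$ gives non-acceptability of $\Or(2m,\F)$ for every $m \geq 2$.

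No genuine obstacle remains at this stage: the technical content was absorbed into \cref{prop:orth+4-not-accp} (constructing commuting conjugate unipotents via the $\alpha_1, \alpha_a$ block trick and verifying via the same calculation as in \cref{prop:gl2-not-acc} that no global conjugator exists) and into the centralizer computation inside \cref{prop:induction-orthogonal}. The theorem itself is one or two lines citing these two results.
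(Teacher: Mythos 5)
Your proposal is correct and matches the paper's proof exactly: the paper also deduces the theorem immediately from \cref{prop:orth+4-not-accp} (the base case $\Or(4,\F)$) together with the contrapositive of \cref{prop:induction-orthogonal} to propagate non-acceptability upward. Nothing is missing.
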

\begin{proof}
    This is an immediate consequence of \cref{prop:induction-orthogonal} and \cref{prop:orth+4-not-accp}.
\end{proof}

\section{Concluding remarks and future questions}\label{sec:conclusion}
\subsection{Finite groups of Lie type} Consider the finite general linear group $\GL(n,q)$, where $q$ is a power of prime. Following the proof of \autoref{thm:gl-not-acc} we easily see that these groups are not acceptable, for all $n$ and $q$. The same result holds for $\U(n,q)$ as well. 
For finite fields, in case of symplectic and orthgonal (three different) groups the unipotent conjugacy class further breaks into several conjugacy 
classes. Hence the method adopted here will not be applicable to decide the acceptability of these groups. At this point we believe that these groups are unacceptable as well. 

\subsection{Exceptional cases} We have seen that the groups discussed in this article are all 
unacceptable. On the contrary, the analogue of these groups (defined over $\mathbb C$ or 
$\mathbb R$) are acceptable (see \cite[pp. 254]{Mi94} for precise table). In the same paper 
M. Larsen proved that $F_4(\mathbb C)$ and its compact form are unacceptable (see \cite[Proposition 3.11]{Mi94}). Hence it will be interesting to see which exceptional groups (defined over 
$\overline{\mathbb F _p}$) are acceptable.
\subsection{Application to invariant theory}  Invariant theory studies the ring of those regular functions on an affine variety $X$ which
are constant on the orbits of an action of a linear algebraic group $G$ on $X$, where the
action is given by a morphism $G \times X \longrightarrow X$. Consider a field $\mathbb F$ and a subgroup $G$ of 
$\GL(V)\cong \GL(n,\mathbb F)$. Take an action of $G$ on a variety $X$. The algebra $\mathbb F[X]$ of regular functions on $X$, can be equipped with a $G$-module 
structure by defining $g\cdot f(v)=f(g^{-1}\cdot v)$ for all $g\in G$, $f\in V^*$ and $v\in V$. The ring of invariant
\begin{align*}
    \mathbb F[X]^G=\{f\in \mathbb F[X]| g\cdot f= f \text{ for all }g\in G\}
\end{align*}
is finitely generated, due to classical results of Noether (see \cite{No15}, \cite{No26}). A consequence of seminal works \cite{La18} and \cite{La18-2} of the French mathematician Vincent Lafforgue is that: when $\mathbb{F}=\mathbb{C}$ and $G$ is a connected linear reductive group and if $\mathbb{C}[G^n]^G$ (where the action is diagonal conjugation) is generated by $1$-argument invariants for all $n\geq 1$, then $G$ is acceptable. The Italian mathematician Claudio Procesi has proved that this is true for classical groups general linear groups, symplectic groups, and orthogonal groups over $\mathbb{C}$
 in his works \cite{Pr76} and \cite{Pr07}. Given that we have proved the unacceptability of $\GL(n,\F)$, $\SL(n,\F)$, $\U(n,\F)$, $\SP(2n,\F)$, $\Or^\epsilon(n,\F)$, we would like to finish by posing the question that does there exist $n\geq 1$ such that $\F[G^n]^G$ is not generated by $1$-argument invariants in case $G$  is one of the groups discussed in this paper.
\bibliographystyle{siam}

\begin{thebibliography}{10}

\bibitem{AnPr13}
{\sc U.~K. Anandavardhanan and D.~Prasad}, {\em A local-global question in
  automorphic forms}, Compos. Math., 149 (2013), pp.~959--995.

\bibitem{Bl94}
{\sc D.~Blasius}, {\em On multiplicities for {${\rm SL}(n)$}}, Israel J. Math.,
  88 (1994), pp.~237--251.

\bibitem{BuGi16}
{\sc T.~C. Burness and M.~Giudici}, {\em Classical groups, derangements and
  primes}, vol.~25 of Australian Mathematical Society Lecture Series, Cambridge
  University Press, Cambridge, 2016.

\bibitem{GoLiO17}
{\sc S.~Gonshaw, M.~W. Liebeck, and E.~A. O'Brien}, {\em Unipotent class
  representatives for finite classical groups}, J. Group Theory, 20 (2017),
  pp.~505--525.

\bibitem{Gr55}
{\sc J.~A. Green}, {\em The characters of the finite general linear groups},
  Trans. Amer. Math. Soc., 80 (1955), pp.~402--447.

\bibitem{Gro02}
{\sc L.~C. Grove}, {\em Classical groups and geometric algebra}, vol.~39 of
  Graduate Studies in Mathematics, American Mathematical Society, Providence,
  RI, 2002.

\bibitem{La18}
{\sc V.~Lafforgue}, {\em Chtoucas pour les groupes r\'{e}ductifs et
  param\'{e}trisation de {L}anglands globale}, J. Amer. Math. Soc., 31 (2018),
  pp.~719--891.

\bibitem{La18-2}
\leavevmode\vrule height 2pt depth -1.6pt width 23pt, {\em Shtukas for
  reductive groups and {L}anglands correspondence for function fields}, in
  Proceedings of the {I}nternational {C}ongress of {M}athematicians---{R}io de
  {J}aneiro 2018. {V}ol. {I}. {P}lenary lectures, World Sci. Publ., Hackensack,
  NJ, 2018, pp.~635--668.

\bibitem{Mi94}
{\sc M.~Larsen}, {\em On the conjugacy of element-conjugate homomorphisms},
  Israel J. Math., 88 (1994), pp.~253--277.

\bibitem{Mi96}
\leavevmode\vrule height 2pt depth -1.6pt width 23pt, {\em On the conjugacy of
  element-conjugate homomorphisms. {II}}, Quart. J. Math. Oxford Ser. (2), 47
  (1996), pp.~73--85.

\bibitem{Ma81}
{\sc I.~G. Macdonald}, {\em Numbers of conjugacy classes in some finite
  classical groups}, Bull. Austral. Math. Soc., 23 (1981), pp.~23--48.

\bibitem{mi}
{\sc J.~Milnor}, {\em On isometries of inner product spaces}, Invent. Math., 8
  (1969), pp.~83--97.

\bibitem{No15}
{\sc E.~Noether}, {\em Der {E}ndlichkeitssatz der {I}nvarianten endlicher
  {G}ruppen}, Math. Ann., 77 (1915), pp.~89--92.

\bibitem{No26}
\leavevmode\vrule height 2pt depth -1.6pt width 23pt, {\em Der
  {E}ndlichkeitssatz der {I}nvarianten endlicher {G}ruppen der charakteristik
  p}, Nachrichten von der Gesellschaft der Wissenschaften zu Göttingen,
  (1926), pp.~28--35.

\bibitem{Pr76}
{\sc C.~Procesi}, {\em The invariant theory of {$n\times n$} matrices},
  Advances in Math., 19 (1976), pp.~306--381.

\bibitem{Pr07}
\leavevmode\vrule height 2pt depth -1.6pt width 23pt, {\em Lie groups},
  Universitext, Springer, New York, 2007.
\newblock An approach through invariants and representations.

\bibitem{Su85}
{\sc T.~Sunada}, {\em Riemannian coverings and isospectral manifolds}, Ann. of
  Math. (2), 121 (1985), pp.~169--186.

\bibitem{ta2}
{\sc D.~E. Taylor}, {\em Conjugacy classes in finite orthogonal groups},
  (2020).

\bibitem{ta1}
\leavevmode\vrule height 2pt depth -1.6pt width 23pt, {\em Conjugacy classes in
  finite symplectic groups},  (2020).

\bibitem{ta3}
\leavevmode\vrule height 2pt depth -1.6pt width 23pt, {\em Conjugacy classes in
  finite unitray groups},  (2020).

\bibitem{Wa63}
{\sc G.~E. Wall}, {\em On the conjugacy classes in the unitary, symplectic and
  orthogonal groups}, J. Austral. Math. Soc., 3 (1963), pp.~1--62.

\bibitem{Wa15}
{\sc S.~Wang}, {\em On local and global conjugacy}, J. Algebra, 439 (2015),
  pp.~334--359.

\bibitem{Yu22}
{\sc J.~Yu}, {\em Acceptable compact {L}ie groups}, Peking Math. J., 5 (2022),
  pp.~427--446.

\end{thebibliography}

\end{document}